\theoremstyle{plain}
\newtheorem{theorem}{Theorem}[section]
\newtheorem{lemma}[theorem]{Lemma}
\newtheorem{proposition}[theorem]{Proposition}
\newtheorem{corollary}[theorem]{Corollary}
\theoremstyle{definition}
\newtheorem{definition}[theorem]{Definition}
\newtheorem{example}[theorem]{Example}
\newtheorem{question}[theorem]{Question}
\newtheorem{remark}{Remark}[section]
\newcommand{\bigslant}[2]{{\raisebox{.2em}{$#1$}\left/\raisebox{-.2em}{$#2$}\right.}}
\newcommand{\Sph}{\mathbb{S}} 
\newcommand{\Z}{\mathbb{Z}} 
\newcommand{\R}{\mathbb{R}} 
\newcommand{\C}{\mathbb{C}} 
\newcommand\restrict[1]{\raisebox{-.5ex}{$|$}_{#1}}
\begin{document}

\title{CYT and SKT manifolds with parallel  Bismut torsion} 

\author{Beatrice Brienza}
\address[Beatrice Brienza]{Dipartimento di Matematica ``G. Peano'', Universit\`{a} degli studi di Torino \\
Via Carlo Alberto 10\\
10123 Torino, Italy}
\email{beatrice.brienza@unito.it}

\author{Anna Fino}
\address[Anna Fino]{Dipartimento di Matematica ``G. Peano'', Universit\`{a} degli studi di Torino \\
Via Carlo Alberto 10\\
10123 Torino, Italy\\
\& Department of Mathematics and Statistics, Florida International University\\
Miami, FL 33199, United States}
\email{annamaria.fino@unito.it, afino@fiu.edu}

\author{Gueo Grantcharov}
\address[Gueo Grantcharov]{Department of Mathematics and Statistics \\
Florida International University\\
Miami, FL 33199, United States}
\email{grantchg@fiu.edu}

\keywords{pluriclosed metric, Calabi-Yau with torsion, Bismut connection, generalized K\"ahler structure, mapping torus}

\subjclass[2010]{53C55; 53C05; 22E25; 53C30; 53C44}

\maketitle

\begin{abstract}  In the present paper, we study compact complex manifolds admitting a Hermitian metric which is SKT and CYT and whose Bismut torsion is parallel. We first obtain a  characterization of  the universal cover of such manifolds as a product of a K\"ahler Ricci-flat manifold with a Bismut flat one. Then, using  a mapping torus construction,  we provide non-Bismut flat examples. The existence of generalized K\"ahler structures is also investigated.

\end{abstract}

\section{Introduction}
Let $(M,J,g)$ be a Hermitian manifold of complex dimension $n$  with fundamental form $\omega=g(J\cdot,\cdot)$. A connection $\nabla$ on $TM$ is said to be Hermitian if $\nabla g=0$ and $\nabla J=0$. In \cite{PG} an affine line of Hermitian connections is introduced. These are known as \emph{Gauduchon} or \emph{canonical} connections and they can be written as
\begin{equation} \label{eqn:canonical}
g(\nabla^t_XY,Z)=g(\nabla^{LC}_XY,Z)+\frac{t-1}{4}(d^c\omega)(X,Y,Z)+\frac{t+1}{4}(d^c\omega)(X,JY,JZ),
\end{equation}
where $d^c \omega=-Jd\omega$. We adopt the convention $Jd\omega(X,Y,Z):=d\omega(JX,JY,JZ)$.

When $(M,g,J)$ is K\"ahler, $d^c\omega$ is zero, and the line collapses to a single point, which is the Levi-Civita connection. However, when  $(M,g,J)$ is not K\"ahler, the line is not trivial and the connections $\nabla^t$ have non vanishing torsion. 
For particular values of $t \in \R$, the Chern and the Bismut connections are recovered. More precisely, $\nabla^{1}=\nabla^{Ch}$ and $\nabla^{-1}=\nabla^{B}$ (\cite{Bi, CH}). Although $\nabla^{LC}, \nabla^{B}, \nabla^{Ch}$ are mutually different connections, any one of them completely determines the other two.  

The Bismut connection, also known as the Strominger connection \cite{Strominger},  can  be  characterized as the only Hermitian connection with totally skew-symmetric torsion. It follows from \eqref{eqn:canonical} that its expression is given by
\begin{equation*} 
g(\nabla^B_XY,Z)=g(\nabla^{LC}_XY,Z)-\frac{1}{2}d^c\omega(X,Y,Z),
\end{equation*}
and its torsion $3$-form $H$ is
$$
H(X, Y,  Z) = g(T^B (X, Y), Z) = d \omega (JX, JY, JZ) = - d^c \omega (X, Y, Z).
$$
If the torsion $3$-form $H$ is closed, i.e.,  $d d^c \omega =0$ or, equivalently, $\partial \overline \partial \omega=0$, the metric $g$ is said {\em strong K\"ahler with torsion} (SKT in short) or {\em pluriclosed}. \\
The Hermitian metric $g$  is said to be {\em Calabi-Yau with torsion} (CYT in short) if the associated Bismut–Ricci curvature $\rho^B(g)$ vanishes, where $\rho^B(g)$ is given, up to a constant factor, by tracing the Bismut curvature tensor $R^B(g)$ in the endomorphism components, i.e., 
\[
\rho^B (X,Y) = \frac{1}{2} \  \sum_{i=1}^{2n} R^B(X,Y, Je_i, e_i),
\]
where $\{e_i\}$ is an orthonormal frame of the tangent space of $M$ at a given point. This is a natural Ricci-type curvature which coincides with the usual Ricci form when the metric $g$ is K\"ahler.
Since the connection  $\nabla^B$ is  Hermitian, it determines via its Ricci form  a representative of the first Chern class in de Rham cohomology of $M$. As a consequence, the existence of  a CYT structure  is obstructed by having vanishing first Chern class.  \\
A  SKT  structure $(M,J,g)$ which is also CYT is known in literature as \emph{Bismut Hermitian-Einstein} (BHE in short).  Taking inspiration from the Calabi-Yau Theorem (\cite{CAL,CAL2,YAU,YAU2}), Garcia-Fernandez, Jordan and Streets investigated in \cite{GFS} whether the condition $c_1(M) = 0$ guarantees the existence of a Bismut Hermitian-Einstein metric. They actually proved that the answer is far from being positive, showing that in every complex  dimension there exist infinitely many complex manifolds with vanishing first Chern class which do not admit a Bismut Hermitian-Einstein metric. An additional obstruction for the existence of CYT metric on a compact complex manifold with vanishing first Chern class arises from the Gauduchon's plurigenera vanishing theorem. It is noticed in \cite{AI}, and in particular shows that a manifold which has Kodaira dimension at least one, can not admit CYT metrics. Using the construction of $T^2$-bundles over complex surfaces of general type with negative K\"ahler-Einstein metrics, one can provide infinite number of complex 3-dimensional manifolds with vanishing first Chern class which do not admit CYT metric. \\
In \cite{ST}, Streets and Tian introduced the \emph{pluriclosed flow},  an evolution equation for Hermitian metrics preserving the SKT condition and the existence of generalized K\"ahler structures (\cite{ST2}).  If $(M,J)$ is a complex manifold and $\omega_0$ is a SKT metric on $M$, the pluriclosed flow with initial data $\omega_0$ evolves as
\begin{equation*} 
\begin{cases}
\frac{\partial}{\partial t} \omega(t)=-\big(\rho^B (t)\big)^{(1,1)} \\
\omega(0)=\omega_0
\end{cases}
\end{equation*}
and its static points are given by Hermitian structures such that
\begin{equation} \label{eqn:staticpoints}
\big(\rho^B \big)^{(1,1)}=\lambda \omega \ \  \text{for} \  \lambda \in \R.
\end{equation}
When a Hermitian structure is SKT and CYT, then \eqref{eqn:staticpoints} is satisfied with $\lambda=0$, motivating a huge interest in finding explicit non-trivial examples of BHE manifolds, where by non trivial we mean not  diffeomorphic to a product of a K\"ahler-Ricci flat manifold and a Bismut flat space. Moreover, up to now, no non-trivial examples of BHE manifolds are known. Non-Hermitian  homogeneous examples which are Bismut Ricci-flat, with  closed torsion form and non-vanishing Bismut curvature have been constructed in \cite{PR1,PR2}, but no Hermitian  homogeneous examples  of this type are known.  Some negative results on compact semisimple Lie groups and more in general on C-spaces have been given in \cite{Barbaro, FG}. Furthermore, Ivanov and Stanchev proved in \cite{IS} that a compact  SKT and  CYT $6$-manifold is Bismut Ricci flat if, and only if, either the torsion has constant norm or the Riemannian scalar curvature is constant. In particular, the torsion is harmonic. \\
The Bismut flatness condition has been largely investigated in literature (\cite{AF,CS,CS2,WO,WO2}), and Bismut flat compact manifolds and Bismut flat simply connected manifolds were completely characterized by Wang, Yang and Zheng in \cite{WYZ}. We recall that 
a \emph{Samelson space} is a Hermitian manifold $(G'=G \times \R^n, g'= b+g_E,J_L)$, where $G$ is a compact, connected and simply connected semisimple Lie group, $g'=b+g_E$ is the bi-invariant metric on $G'$ given by the product of the bi-invariant metric $b$ on $G$ and the euclidean metric $g_E$, and $J_L$ is a left invariant complex structure compatible with $g'$. 
A group homomorphism $\rho:\Z^n \to {\mbox {Isom}}(G)$ induces a free and properly discontinuous action of $\Z^n$ on $G'$ as isometries via
\[
m \cdot (p,t) \mapsto (\rho(m)p, t+m).
\]
If $J_L$ is preserved by  this action of $\Z^n$, then the compact quotient $G' / \Z^n$ inherits the structure of a complex manifold by $G'$. In this case, $G' / \Z^n$ is said to be a \emph{local Samelson space}.

By  \cite{WYZ}, if  $(M,g,J)$  is  a compact Bismut flat Hermitian manifold, then there exists a finite unbranched cover $M'$ of $M$ which is a local Samelson space. Moreover, if  $(M,g,J)$ is  Bismut-flat  simply-connected Hermitian manifold, then there exists a Samelson space $(G',g',J')$ such that $M$ is an open complex submanifold of $G'$ and $g = g' \restrict{M}$.

\smallskip
In this paper we will mainly focus on the case when the torsion of Bismut connection is parallel with respect to the Bismut connection.  According to \cite{ZZ2}, we will call such a manifold (and sometimes also the Hermitian metric) Bismut torsion parallel (BTP in short). In \cite{ZZ3}, it has been proved that a Hermitian metric is BTP and SKT if, and only if, it is Bismut K\"ahler-like, namely, if the Bismut curvature satisfied the first Bianchi identity and the type condition (see Section \ref{section:one} for further details). 
The BTP condition was studied by several authors, e.g., \cite{AV, PZ, ZZ2, ZZ3} and in the references therein. In the non-hermitian setting, it was proved in \cite{AFF} that a complete and simply connected Riemannian manifold admitting a connection with parallel and closed skew-torsion is, up to products, a Lie group. \\
Our first main result characterizes the universal cover of SKT and CYT manifolds with parallel Bismut torsion. We prove the following 
\begin{theorem} 
Let $(M, I)$ be a compact complex manifold admitting a SKT and CYT  $I$-Hermitian metric  $h$ such that its  Bismut torsion $3$-form $H$  is parallel, i.e. $\nabla^B H=0$. Then, the Riemannian holomorphic universal cover $(\tilde M,\tilde I,  \tilde h)$  of $(M, I, h)$  is holomorphically isometric to the product $(M_1,  J_1, g_1) \times (M_2,  J_2, g_2)$, where $(M_1, J_1, g_1)$ is a K\"ahler Ricci flat manifold and $(M_2, J_2,  g_2)$ is a Samelson space.
\end{theorem}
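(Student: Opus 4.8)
The plan is to pass to the universal cover and run a decomposition adapted to the parallel torsion, using the CYT hypothesis only at the end to identify the factors. Since $M$ is compact, the cover $(\tilde M,\tilde I,\tilde h)$ is complete and simply connected and inherits an SKT, CYT and BTP structure; in particular its Bismut connection $\tilde\nabla^B$ is a metric, $\tilde I$-preserving connection whose torsion $3$-form $H$ is parallel ($\tilde\nabla^B H=0$) and closed (from SKT, since $dH=-dd^c\omega=0$). These are exactly the hypotheses of the splitting result of \cite{AFF}: I would invoke it to write $(\tilde M,\tilde h)=(M_0,g_0)\times(S,g_S)$ as a Riemannian product, where $H$ vanishes identically on the factor $M_0$ and $(S,g_S)$ is a product of simple compact Lie groups with bi-invariant metric on which $\tilde\nabla^B$ is the flat canonical connection whose torsion is the Cartan $3$-form.

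Next I would carry the Hermitian data through the splitting. Because $\tilde I$ and $H$ are both $\tilde\nabla^B$-parallel, the distributions underlying the decomposition are $\tilde\nabla^B$-parallel, and one must check that $\tilde I$ preserves the torsion-free and the Lie directions so that the product is holomorphic. Granting this, on $M_0$ one has $\tilde\nabla^B\restrict{M_0}=\nabla^{LC}\restrict{M_0}$, whence $\nabla^{LC}\tilde I=0$ and $(M_0,g_0)$ is K\"ahler, while on $S$ the connection $\tilde\nabla^B$ is flat, so $(S,g_S)$ is a complete, simply connected Bismut flat Hermitian manifold.

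Then I would identify the two factors. On the K\"ahler factor the Bismut, Chern and Levi-Civita connections coincide, so $\rho^B$ is the usual Ricci form; the CYT condition $\rho^B=0$ therefore makes $(M_0,g_0)$ K\"ahler Ricci-flat, which is the factor $(M_1,J_1,g_1)$. On the Bismut flat factor $(S,g_S)$ the classification of simply connected Bismut flat Hermitian manifolds of \cite{WYZ} applies, and since $S$ is moreover complete (being a factor of the complete cover) it is a genuine Samelson space $G\times\R^n$, giving $(M_2,J_2,g_2)$; this also accounts for the Euclidean directions that the left-invariant complex structure pairs with $G$, which are grouped into the Lie part rather than into $M_0$.

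The hard part will be the second step: the \cite{AFF} splitting is a priori only Riemannian and is governed by the support of the parallel form $H$, so the real work is to show that $\tilde I$ does not mix the torsion-free and the Lie directions, i.e. that the Riemannian product is also a product of complex manifolds. Here the Bismut K\"ahler-like characterization of \cite{ZZ3} (which is equivalent to SKT $+$ BTP) is the natural tool, since it forces $R^B$ to satisfy the first Bianchi identity and the K\"ahler-type symmetries; these rigidify the interaction between $\tilde I$ and $H$ and, together with CYT ruling out any surviving Ricci on the torsion-free factor, let the pieces assemble into the asserted holomorphic isometry.
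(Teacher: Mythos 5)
There is a genuine gap, and it sits exactly where you flag the ``hard part.'' The splitting of \cite{AFF} that you invoke is governed by the kernel distribution $\mathcal{K}=\ker H$: it produces a Riemannian product of a factor on which $H$ vanishes and Lie-group factors carrying all of the torsion. But $\mathcal{K}$ is \emph{not} $\tilde I$-invariant in general, and the target objects of the theorem are themselves the counterexamples: on a Samelson space $G\times\R^n$ the Cartan $3$-form has kernel exactly $\R^n$ (as $\mathfrak{g}$ is semisimple), while the left-invariant complex structure $J_L$ pairs the $\R^n$ directions with a maximal torus inside $G$. So the statement ``one must check that $\tilde I$ preserves the torsion-free and the Lie directions,'' which you propose to verify, is actually false for the decomposition you chose, and your parenthetical remark that the Euclidean directions get ``grouped into the Lie part rather than into $M_0$'' contradicts the \cite{AFF} splitting itself (which would place them in the torsion-free factor). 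What is needed is a different decomposition: the paper takes $\mathcal{F}_1$ to be the \emph{maximal} sub-distribution of $\mathcal{K}$ that is both $\nabla^B$-parallel ($=\nabla^{LC}$-parallel on $\mathcal{K}$) and $I$-invariant, moves the leftover piece $\mathcal{W}=\mathcal{F}_1^\perp\cap\mathcal{K}$ into the second factor $\mathcal{F}_2=\mathcal{W}\oplus\mathcal{K}^\perp$, and only then applies the de Rham splitting theorem to $TM=\mathcal{F}_1\oplus\mathcal{F}_2$. No appeal to \cite{AFF} is made or needed.

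The second gap is in how you identify the non-K\"ahler factor. After the correct regrouping, the second factor is $\mathcal{W}\oplus\mathcal{K}^\perp$, which is \emph{not} the Lie-group factor of \cite{AFF} (it contains part of $\ker H$), so you cannot read off Bismut flatness from that splitting. The paper instead observes that $(M_2,h_2,J_2)$ is a complete SKT, BTP (hence BKL by \cite{ZZ3}) manifold with $\rho^B=0$ and, by maximality of $\mathcal{F}_1$, with no K\"ahler de Rham factor; the rigidity theorem of Zhao--Zheng \cite{ZZ} (in its complete version) then forces $M_2$ to be Bismut flat, after which \cite{WYZ} identifies it as a Samelson space. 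In particular the CYT hypothesis is used essentially on the \emph{second} factor, not only to make $M_0$ Ricci-flat as in your outline. Your closing sentence, which asks the BKL symmetries to ``rigidify the interaction between $\tilde I$ and $H$ and let the pieces assemble,'' is precisely the content of these two missing steps and is not supplied by anything you have written.
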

In Section \ref{section:two} we use the above characterization to construct non-trivial examples of CYT and SKT Hermitian manifolds (Propositions \ref{prop:flathopf} and \ref{prop:flatsam}). \\
With a similar technique used by the first two authors in \cite{BF}, in Lemma \ref{lemma:pluriclosed} we show the existence of a SKT structure $(I, h)$ on mapping tori of type $M_f=(K \times \Sph^3)_f$, with $f=(\psi,Id_{\Sph^3})$, where $K$ is a compact K\"ahler manifold and $\psi$ is a K\"ahler isometry of $K$. We point out that such mapping tori do not admit any  K\"ahler  metric as their first Betti numbers are odd (Proposition \ref{proposition:prop}).  If in addition $K$ is K\"ahler Ricci-flat  and $\psi$ preserves the K\"ahler Ricci-flat metric,  the SKT structure $(I, h)$ is CYT.
Furthermore, we observe that  the above Hermitian  structure $(I, h)$ is BTP (Remark \ref{remark:remark}) and, whenever $K$ is a K3 surface and  $\psi \neq Id_K$, the mapping tori $M_f$ are non-trivial, i.e., they are not diffeomorphic to a global product of $K$ with the Hopf Surface (Corollary \ref{corollary:nonproduct}).  Hence, we provide the first known examples of non-trivial Bismut Hermitian-Einstein manifolds.\\
It is natural to ask the following
\begin{question}
Does there exist a Bismut Hermitian Einstein manifold $(M,g,J)$ such that $(g,J)$ is not Bismut Torsion Parallel?
\end{question}
On the examples $(M_f, I, h)$ we also investigate the existence of generalized K\"ahler structures.  Let  $I_-$  be the complex structure  $I$ on  the mapping torus $M_f$.  In Theorem \ref{theorem:kahlermappingtorus} we prove that   $(M_f,  I_-, h)$ admits another complex structure $I_+$ compatible with $h$ and such that $d^c_+\omega_+=-d^c_-\omega_-$, i.e., $(h,I_\pm)$ defines a generalized K\"ahler structure on $M_f$. 
In the last Section we also give another description of  the mapping tori $M_f$ as total spaces of holomorphic fibrations with fibre $K$ over the Hopf Surface. To such spaces it is always possible to associate the Borel spectral sequence  (Theorem \ref{theorem:holfib}), which relates the Dolbeault cohomology of $M_f$ with the ones of $K$ and $\Sph^3 \times \Sph^1$.  \\
Since the mapping tori $M_f$ topologically corresponds to $\Z$-quotient of $K \times \Sph^3 \times \R$, i.e., $\Z$-quotient of a product of a K\"ahler manifold with the Samelson space $\Sph^3 \times \R$, taking inspiration from the local Samelson space construction, we provide a generalization of the previous machinery to $\Z^n$-quotients of the product manifolds $K \times G'$, where $G'=G \times \R^n$ is any Samelson space (for a more detailed construction see  Lemma \ref{lemma:pluriclosed2} and Proposition \ref{prop:flatsam}). \\
As in the case of mapping tori, these quotients admit a generalized K\"ahler structure (Theorem \ref{theorem:samspaces}). Moreover, in the case of $G'=\Sph^3 \times \R$ the generalized K\"ahler structure constructed is different from the one constructed in Theorem \ref{theorem:kahlermappingtorus}. As we point out, in the first case the complex structures of generalized K\"ahler metric induce opposite orientations, whereas in the second case the complex structures of the Generalized K\"ahler metric induce the same orientation.

\section{Universal Cover of Bismut Hermitian Einstein manifolds with parallel torsion} \label{section:one}


In this section we obtain a characterization of the universal cover of compact complex manifolds admitting a SKT and CYT metric whose Bismut connection has parallel torsion. \\
Before stating the main result of the section we recall some preliminary definitions and known results which will be useful in the paper. 
 Given  a  Hermitian manifold.  $(M,g,J)$,  a  connection $\nabla$ on $TM$ is said to be Hermitian if $\nabla J=0$ and $\nabla g=0$. 
 
In (\cite{PG}), Gauduchon proved that there exists an affine line of canonical Hermitian connections, passing through the Chern connection and the Bismut connection, which are completely determined by their torsion. In this paper we are mainly interested in the latter. Firstly introduced by Bismut in \cite{Bi}, the Bismut connection is the unique Hermitian connection having totally skew-symmetric torsion, i.e., it is the unique Hermitian connection such that  $H (\cdot, \cdot, \cdot) =g(T^B (\cdot, \cdot), \cdot)$ is a $3$-form on $M$. 
We recall the following 

\begin{definition}

a Hermitian manifold $(M,g,J)$ is said to be

\begin{enumerate}

\item  \emph{Bismut torsion-parallel}, or BTP in short, if $\nabla^B H=0$;

\item SKT or  \emph{pluriclosed}  if  $dH=0$, or, equivalently, $dd^c\omega=0$;

%
%

\item  \emph{Bismut K\"ahler-like} (BKL in short) if the Bismut curvature  $R^B$ satisfies the first Bianchi identity and the type condition, i.e., $R^B (X,Y,Z,W)=R^B (JX,JY,Z,W)$, for every vector fields $X, Y, Z, W$ on M (we write $X,Y,Z,W\in\mathfrak{X}(M)$).

\end{enumerate}

\end{definition}

In \cite{ZZ3}, Zhao and Zheng proved that  a Hermitian  manifold is BKL if and only if it is  BTP and SKT. More precisely, the following Theorem holds:
\begin{theorem}[\cite{ZZ3}] \label{theorem:zz3}
a Hermitian manifold $(M,g,J)$ is  BKL if and only if $\nabla^B$ has parallel Bismut torsion and $(g,J)$ is SKT.
\end{theorem}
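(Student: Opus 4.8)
The plan is to reduce the whole statement to a single identity expressing the failure of the first Bianchi identity of $R^B$ in terms of $\nabla^B H$ and the quadratic $4$-form $\sigma_H:=\tfrac12\sum_i(\iota_{e_i}H)\wedge(\iota_{e_i}H)$, and then treat the two implications separately. First I would record the symmetries valid on any Hermitian manifold: since $\nabla^B g=0$, the tensor $R^B$ is skew in its first pair and in its last pair, and since $\nabla^B J=0$ it satisfies $R^B(X,Y,JZ,JW)=R^B(X,Y,Z,W)$. Next I would note that inside BKL the type condition is redundant given the first Bianchi identity: by the classical algebraic lemma, a $(0,4)$-tensor skew in each pair and satisfying the first Bianchi identity enjoys pair-exchange symmetry $R^B(X,Y,Z,W)=R^B(Z,W,X,Y)$, and combining this with the last-pair $J$-invariance gives $R^B(JX,JY,Z,W)=R^B(Z,W,JX,JY)=R^B(Z,W,X,Y)=R^B(X,Y,Z,W)$, which is exactly the type condition. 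Hence the full BKL condition is equivalent to the vanishing of the first Bianchi defect $b(R^B)(X,Y,Z,W):=\mathfrak{S}_{X,Y,Z}R^B(X,Y,Z,W)$, and the theorem reduces to proving $b(R^B)=0\iff(\nabla^B H=0 \text{ and } dH=0)$.

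The second step is to compute $b(R^B)$ through the torsion. The general first Bianchi identity for a metric connection with skew torsion $H$ gives $b(R^B)(X,Y,Z,W)=\mathfrak{S}_{X,Y,Z}(\nabla^B_X H)(Y,Z,W)+Q(X,Y,Z,W)$, where $Q$ is the cyclic sum of $\sum_i H(X,Y,e_i)H(e_i,Z,W)$. A short frame computation, using only that $H$ is a $3$-form (so that $P(A,B,C,D):=\sum_i H(A,B,e_i)H(C,D,e_i)$ is skew in each pair and symmetric under exchanging the two pairs), shows that $Q=\sigma_H$. In parallel, the torsion-twisted Cartan formula for $d$ yields $dH=\mathrm{Alt}(\nabla^B H)-2\sigma_H$, where $\mathrm{Alt}$ is the total alternation over the four arguments. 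These two identities are the engine of the argument.

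For the implication BTP and SKT $\Rightarrow$ BKL I would argue directly: $\nabla^B H=0$ kills the first term of $b(R^B)$ and reduces $dH$ to $-2\sigma_H$, so the SKT hypothesis $dH=0$ forces $\sigma_H=0$; hence $b(R^B)=\mathfrak{S}(\nabla^B H)+\sigma_H=0$. Thus the first Bianchi identity holds, pair symmetry and the type condition follow as above, and $(g,J)$ is BKL. This direction is essentially formal.

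The converse is the hard part. The vanishing $b(R^B)=0$ only yields the cyclic relation $\mathfrak{S}_{X,Y,Z}(\nabla^B_X H)(Y,Z,W)=-\sigma_H$, which controls a single trace-like combination of $\nabla^B H$ rather than the full tensor; there is no purely algebraic way to extract $\nabla^B H=0$ from it, and indeed the conclusion fails for general skew-torsion connections. The main obstacle is therefore to bring in the complex structure, which enters through the type condition (the first-pair $J$-invariance of $R^B$, a genuine strengthening of pair symmetry). I would pass to a local unitary frame and express $R^B$ via the Chern connection together with the Chern-covariant derivatives of the torsion, so that the BKL symmetries become \emph{linear} conditions directly on the components of $\nabla^B H$; decomposing $\nabla^B H$ into its $J$-types and matching types, while also using the second (differential) Bianchi identity for $\nabla^B$ to couple $\nabla^B R^B$ back to $R^B$, should force every component of $\nabla^B H$ to vanish. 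Once $\nabla^B H=0$ is established, $b(R^B)=0$ gives $\sigma_H=0$, whence $dH=-2\sigma_H=0$, i.e. the SKT condition. This type-by-type bookkeeping in a unitary frame is the technical heart of the proof and the step I expect to be the most laborious.
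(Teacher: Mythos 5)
This statement is imported from \cite{ZZ3} and the paper gives no proof of it, so there is nothing internal to compare your route against; your attempt has to stand on its own. Its setup is sound: the observation that the type condition is redundant once the first Bianchi identity holds (skewness in both pairs plus the cyclic identity gives pair symmetry, which transports the $J$-invariance of the last pair, coming from $\nabla^B J=0$ and $\nabla^B g=0$, to the first pair) is correct and cleanly reduces the claim to $b(R^B)=0$ if and only if $\nabla^B H=0$ and $dH=0$. Your two identities $b(R^B)=\mathfrak{S}(\nabla^B H)+\sigma_H$ and $dH=\mathrm{Alt}(\nabla^B H)\pm 2\sigma_H$ are, up to sign conventions, the standard Bianchi and Cartan formulas for a metric connection with totally skew torsion, and they do give the implication (BTP and SKT) $\Rightarrow$ BKL exactly as you describe. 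That direction is complete.

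The genuine gap is the converse, which is the entire content of the cited theorem. From $b(R^B)=0$ you extract only the single cyclic relation $\mathfrak{S}(\nabla^B H)=-\sigma_H$, and, as you yourself concede, no algebraic manipulation of that relation yields $\nabla^B H=0$. At that point the write-up stops being a proof: ``decomposing $\nabla^B H$ into its $J$-types and matching types \dots should force every component of $\nabla^B H$ to vanish'' is a statement of intent, not an argument. You do not identify which components of $\nabla^B H$ are controlled by which BKL symmetry, nor verify that the resulting linear conditions admit only the trivial solution; the invocation of the second Bianchi identity is speculative (the actual proof in \cite{ZZ3} is a first-order computation with the structure equations in a unitary coframe and occupies most of that paper). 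Note also that in this direction SKT is a conclusion, not a hypothesis, so you cannot lean on the Fino--Tardini result (Theorem \ref{thm:finotardini}), which derives $\nabla^B H=0$ from the first Bianchi identity only under the additional assumption $dH=0$. The hard half of the equivalence therefore remains unproved.
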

The relations between the first Bianchi identity for the Bismut connection, the SKT and the BTP conditions have been investigated by the second author and Tardini in \cite{FT}.  In particular, they proved the following 
\begin{theorem} [\cite{FT}] \label{thm:finotardini}
Let M be a complex manifold with a  compatible SKT metric $g$ such that the Bismut connection satisfies the first Bianchi identity. Then $\nabla^B H =0$.
\end{theorem}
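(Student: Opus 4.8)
The plan is to regard $\nabla^B$ as a metric connection with totally skew-symmetric torsion $H$ and to play off against each other the two universal identities that such a connection satisfies, feeding one hypothesis into each. Throughout, $\{e_i\}$ is a local orthonormal frame, $\mathfrak{S}_{X,Y,Z}$ denotes the cyclic sum over $X,Y,Z$ (with $W$ kept fixed), and I write $\sigma_H$ for the natural $4$-form attached to $H$,
\[
\sigma_H(X,Y,Z,W)=\mathfrak{S}_{X,Y,Z}\sum_i H(X,Y,e_i)\,H(e_i,Z,W)=\tfrac12\sum_i (e_i\lrcorner H)\wedge(e_i\lrcorner H)(X,Y,Z,W),
\]
which is readily checked to be totally skew-symmetric in its four arguments.

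First I would record the two identities. Pairing the general first Bianchi identity for a connection with torsion, $\mathfrak{S}_{X,Y,Z}R^B(X,Y)Z=\mathfrak{S}_{X,Y,Z}\big((\nabla^B_X T^B)(Y,Z)+T^B(T^B(X,Y),Z)\big)$, with a fourth vector $W$ and using $g(T^B(X,Y),Z)=H(X,Y,Z)$, the quadratic term assembles into exactly $\sigma_H$, giving
\begin{equation}
\mathfrak{S}_{X,Y,Z}R^B(X,Y,Z,W)=\mathfrak{S}_{X,Y,Z}(\nabla^B_X H)(Y,Z,W)+\sigma_H(X,Y,Z,W). \tag{a}
\end{equation}
Separately, expanding $dH$ through $\nabla^B$ by means of the formula for the exterior derivative of a form in terms of a connection with torsion yields
\begin{equation}
dH(X,Y,Z,W)=\mathfrak{A}(\nabla^B H)(X,Y,Z,W)+2\sigma_H(X,Y,Z,W), \tag{b}
\end{equation}
where $\mathfrak{A}(\nabla^B H)(X,Y,Z,W)=(\nabla^B_X H)(Y,Z,W)-(\nabla^B_Y H)(X,Z,W)+(\nabla^B_Z H)(X,Y,W)-(\nabla^B_W H)(X,Y,Z)$. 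Here ``first Bianchi identity'' is understood in the algebraic sense $\mathfrak{S}_{X,Y,Z}R^B(X,Y,Z,W)=0$.

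Now I would feed in the hypotheses. The first Bianchi identity turns (a) into
\begin{equation}
(\nabla^B_X H)(Y,Z,W)-(\nabla^B_Y H)(X,Z,W)+(\nabla^B_Z H)(X,Y,W)=-\sigma_H(X,Y,Z,W), \tag{I}
\end{equation}
while the SKT condition $dH=0$ turns (b) into $\mathfrak{A}(\nabla^B H)=-2\sigma_H$, that is
\begin{equation}
(\nabla^B_X H)(Y,Z,W)-(\nabla^B_Y H)(X,Z,W)+(\nabla^B_Z H)(X,Y,W)-(\nabla^B_W H)(X,Y,Z)=-2\sigma_H(X,Y,Z,W). \tag{II}
\end{equation}
Subtracting (I) from (II) isolates the covariant derivative,
\begin{equation}
(\nabla^B_W H)(X,Y,Z)=\sigma_H(X,Y,Z,W), \tag{III}
\end{equation}
so that $\nabla^B H$ is itself (a permutation of) the totally skew-symmetric $4$-form $\sigma_H$. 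Finally, substituting (III) back into the left-hand side of (I) and using that cyclically permuting three of four arguments of a $4$-form is an even permutation while a transposition changes the sign, one finds that each of the three terms on the left of (I) equals $-\sigma_H(X,Y,Z,W)$, so that this left-hand side equals $-3\sigma_H$. Comparing with the right-hand side $-\sigma_H$ forces $\sigma_H=0$, and then (III) gives $\nabla^B H=0$, as claimed.

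The only genuine work is the combinatorial bookkeeping in (a) and (b): verifying that the quadratic torsion terms assemble into the \emph{same} $4$-form $\sigma_H$ in both identities, and tracking the sign and the coefficient $2$ of $\sigma_H$ in the expansion of $dH$. The conceptual point, and what makes the two hypotheses combine so efficiently, is that (a) and (b) symmetrize $\nabla^B H$ in different ways (a cyclic sum over three arguments versus the full alternation over four), so that their difference collapses to the single term $(\nabla^B_W H)(X,Y,Z)$; once this is known to coincide with $\sigma_H$, reinserting it into the cyclic first Bianchi relation over-determines $\sigma_H$ and kills it. I note that this argument uses the complex structure only through the definition of the Bismut connection and of SKT, the rest being a statement about metric connections with closed skew torsion.
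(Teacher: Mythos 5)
Your argument is correct: the two identities (a) and (b) are the standard Friedrich--Ivanov formulas for a metric connection with totally skew-symmetric torsion, the sign and coefficient bookkeeping checks out, and the substitution of (III) into (I) does force $-3\sigma_H=-\sigma_H$, hence $\sigma_H=0$ and $\nabla^B H=0$. The paper itself gives no proof of this statement (it is imported from \cite{FT}), and your derivation is essentially the same route taken there, namely combining the first Bianchi identity and the expansion of $dH$ for a connection with closed skew torsion to first identify $\nabla^B H$ with the $4$-form $\sigma_H$ and then over-determine it to zero.
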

While in the Riemannian case there is only one natural trace of the Curvature tensor (yielding the Ricci tensor), in the complex setting many different traces are possible. 
\begin{definition}
Let $(M,g,J)$ be a Hermitian manifold. Let 
\[
\rho^B (X,Y)=\frac{1}{2} \  \sum_{i=1}^{2n} g(R^B(X,Y) Je_i, e_i),
\]
where $\{e_i\}$ is a  orthonormal frame of the tangent space of $M$ at a given point. The tensor $\rho^B$ is also  known in literature as the (first) Bismut Ricci curvature. 
\end{definition}
\begin{definition}
Let $(M,J, g)$ be a Hermitian manifold. If $\rho^B=0$ then the Hermitian  structure $(J, g)$ is said to be Calabi-Yau with torsion (CYT in short).
\end{definition}
When a Hermitian structure $(g,J)$ is both CYT and SKT, it is called Bismut Hermitian-Einstein (BHE in short). The main purpose of this section is to investigate the universal cover of  a BHE manifold with parallel Bismut torsion. One of the main tool of our proof is given by the following Theorem of Zhao and Zheng.
\begin{theorem}[\cite{ZZ}]
Let $(M,g,J)$ be a compact BKL Hermitian manifold without any K\"ahler de Rham factor. If the (first) Bismut Ricci curvature vanishes, then $(g,J)$ is Bismut flat. 
\end{theorem}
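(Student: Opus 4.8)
The plan is to combine the differential structure of Bismut K\"ahler-like metrics with the algebraic structure that the parallel torsion imposes on each tangent space. First I would record the two structural inputs. Since $(M,g,J)$ is BKL, by Theorem \ref{theorem:zz3} it is SKT and BTP, so $\nabla^B H=0$; moreover I would invoke the known fact that a Bismut K\"ahler-like metric has parallel Bismut curvature, $\nabla^B R^B=0$. Passing to the universal cover $\tilde M$, which is complete and simply connected, $\nabla^B$ is then a metric connection with parallel skew torsion and parallel curvature, so $(\tilde M,g,\nabla^B)$ is an Ambrose--Singer (locally homogeneous) space, and its $\nabla^B$-holonomy decomposes $T\tilde M$ into $\nabla^B$-parallel, $J$-invariant subbundles refining the Levi-Civita de Rham splitting.

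Next I would extract the Lie-algebra structure carried by $H$. Writing $[X,Y]:=T^B(X,Y)$, so that $g([X,Y],Z)=H(X,Y,Z)$, the first Bianchi identity for a connection with torsion reads $\mathfrak{S}_{X,Y,Z}R^B(X,Y)Z=\mathfrak{S}_{X,Y,Z}\big((\nabla^B_X T^B)(Y,Z)+T^B(T^B(X,Y),Z)\big)$. Because $(g,J)$ is BKL the left-hand side vanishes, and because $\nabla^B H=0$ the derivative term drops out, leaving exactly the Jacobi identity $\mathfrak{S}_{X,Y,Z}[[X,Y],Z]=0$. Total skew-symmetry of $H$ makes $g$ ad-invariant, so $(T_pM,[\cdot,\cdot],g_p)$ is a metric Lie algebra with positive-definite invariant form, hence reductive of compact type: $\mathfrak{g}=\mathfrak{z}\oplus\mathfrak{s}$ with $\mathfrak{z}$ the center and $\mathfrak{s}$ semisimple of compact type. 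The center is precisely $\{X:\iota_X H=0\}$, the locus where $\nabla^B=\nabla^{LC}$; it is parallel and $J$-invariant, so it integrates to a K\"ahler de Rham factor of $\tilde M$. The hypothesis of no K\"ahler de Rham factor therefore forces $\mathfrak{z}=0$, i.e. $\mathfrak{g}=\mathfrak{s}$ is semisimple of compact type.

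It remains to feed in the CYT hypothesis $\rho^B=0$ and conclude $R^B=0$. Here I would use the second Bianchi identity: with $\nabla^B R^B=0$ it collapses to $\mathfrak{S}_{X,Y,Z}R^B([X,Y],Z)=0$, an infinitesimal $\ad(\mathfrak{g})$-invariance for the parallel, K\"ahler-symmetric operator $R^B$. Thus $R^B$ is an $\ad(\mathfrak{g})$-invariant algebraic K\"ahler curvature tensor on the compact semisimple $\mathfrak{g}$, and $\rho^B$ is its Ricci contraction. The goal then becomes the purely algebraic assertion that such an invariant tensor with vanishing first Ricci form must vanish: decomposing $\mathfrak{g}$ into simple ideals and the relevant $J$-invariant two-forms into $\ad(\mathfrak{g})$-isotypic pieces, Schur's lemma constrains the invariant curvature operators, on each simple factor the Ricci contraction is forced to be a multiple of the (definite) Killing form, $\rho^B=0$ kills these multiples, and the surviving invariant pieces are incompatible with the K\"ahler-type symmetries unless they vanish. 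Hence $R^B=0$, i.e. $(g,J)$ is Bismut flat, and by \cite{WYZ} $\tilde M$ is a Samelson space.

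The main obstacle is this last step. Producing the metric Lie algebra from the torsion and splitting off the K\"ahler factor is essentially formal once $\nabla^B R^B=0$ and $\nabla^B H=0$ are available; the genuine difficulty is showing that vanishing first Bismut Ricci leaves no room for any invariant K\"ahler-type Bismut curvature on the semisimple part. This is where compactness and the interplay between the $J$-linear type condition, the Bianchi identities, and the representation theory of $\ad(\mathfrak{g})$ must be used carefully, precisely to rule out invariant curvature tensors that are Ricci-flat but non-zero (the Lie-theoretic analogue of the Calabi--Yau phenomenon) which is exactly what is excluded once the would-be K\"ahler directions have been removed. A secondary technical point is verifying that the $\nabla^B$-parallel distributions are genuinely $J$-invariant and descend to honest de Rham factors, so that the hypothesis of no K\"ahler de Rham factor translates cleanly into $\mathfrak{z}=0$.
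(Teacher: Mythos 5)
The statement you are proving is not actually proved in this paper: it is quoted verbatim from Zhao--Zheng \cite{ZZ}, so there is no internal argument to compare yours against. Judged on its own terms, your outline contains correct and relevant ideas --- the derivation of the Jacobi identity for $[X,Y]:=T^B(X,Y)$ from the first Bianchi identity plus $\nabla^B H=0$, and the identification of the center $\mathfrak{z}=\ker H$ with a parallel, $J$-invariant distribution integrating to a K\"ahler de Rham factor, are both sound and indeed echo the paper's own Lemma on $\mathcal{K}=\ker H$ --- but it is not a proof, for two reasons.

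First, the assertion that a BKL metric has $\nabla^B R^B=0$ is stated as a ``known fact'' with no justification and no reference, and it is load-bearing twice: it is what makes $(\tilde M,g,\nabla^B)$ an Ambrose--Singer space, and it is what collapses the second Bianchi identity to the $\ad$-invariance of $R^B$. The BKL hypothesis gives you the first Bianchi identity, the type condition, and (via Theorem \ref{theorem:zz3}) parallel torsion; parallel curvature does not follow from these by any argument you have supplied, and without it the entire second half of your proof has no starting point. Second, even granting $\nabla^B R^B=0$, the decisive step --- that an $\ad(\mathfrak{g})$-invariant curvature tensor with K\"ahler-type symmetries and vanishing first Ricci contraction on a compact semisimple metric Lie algebra must vanish --- is only described, not carried out. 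Your appeal to Schur's lemma shows at best that the Ricci contraction on each simple ideal is a multiple of the Killing form, so $\rho^B=0$ kills the Ricci; it does not by itself exclude invariant, Ricci-flat, nonzero curvature operators (the space of invariant algebraic curvature tensors on a simple Lie algebra is in general larger than the span of $[[X,Y],Z]$), and you acknowledge this is ``exactly'' the hard point without resolving it. As it stands the argument proves that $\rho^B$ is forced to vanish on each factor, which is the hypothesis, not the conclusion $R^B=0$. To repair this you would need either to establish that $R^B$ is specifically of the form $c\,g([[X,Y],Z],W)$ on each simple factor (which amounts to knowing the Levi-Civita connection acts as $\tfrac12[\cdot,\cdot]$ on $\nabla^B$-parallel frames, i.e.\ essentially to knowing Bismut flatness already), or to follow the actual route of \cite{ZZ}, which relies on their structure theory of BKL manifolds rather than on an invariant-theoretic classification of curvature tensors.
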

\begin{remark}
By Theorem \ref{theorem:zz3}, the latter Theorem could be equivalently reformulated in the case of $(M,g,J)$ being a compact BHE and BTP manifold without any K\"ahler de Rham factor.
\end{remark}
\begin{remark} \label{remark:completeness}
The latter Theorem holds with the same proof in the case of the metric $g$ being complete (and the manifold $M$  not necessarily compact).
\end{remark}
\smallskip

\smallskip

\begin{lemma}
Let $(M, h, I)$ be a complete Hermitian manifold with parallel Bismut torsion $H$, i.e., $\nabla^B H=0$. Let 
\[
  \cal{K}:=\ker H=\{ X \in \mathfrak{X}(M)\ | \ \iota_X H =0\}.
\] 
Then, $\cal{K}$ is an integrable distribution of $M$ preserved by both the Levi-Civita  $\nabla^{LC}$ and the Bismut connection $\nabla^B$.
\end{lemma}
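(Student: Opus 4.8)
The plan is to exploit the parallelism $\nabla^B H = 0$ directly, together with the fact that $\nabla^B$ and $\nabla^{LC}$ differ by a contorsion term governed by $H$ itself. First I would check that $\cal{K}$ is a genuine subbundle of constant rank: since $\nabla^B$ is a metric connection with $\nabla^B H = 0$, its parallel transport consists of isometries preserving the $3$-form $H$, and hence carries $\cal{K}_p$ isomorphically onto $\cal{K}_q$ for any two points joined by a path. On a connected $M$ this forces $\dim \cal{K}_p$ to be constant, so that $\cal{K}=\ker(X\mapsto \iota_X H)$ is a smooth distribution.

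Next I would show that $\nabla^B$ preserves $\cal{K}$. Expanding $(\nabla^B_Y H)(X,Z,W)=0$ for a section $X$ of $\cal{K}$, every term in which $H$ is contracted against $X$ vanishes because $\iota_X H=0$; what survives is exactly $-H(\nabla^B_Y X, Z, W)=0$ for all $Z,W$, i.e. $\iota_{\nabla^B_Y X}H=0$. Thus $\nabla^B_Y X$ is again a section of $\cal{K}$.

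To pass to $\nabla^{LC}$ I would use the contorsion identity $g(\nabla^B_X Y,Z)=g(\nabla^{LC}_X Y,Z)+\frac{1}{2}H(X,Y,Z)$, which follows from \eqref{eqn:canonical} at $t=-1$ together with $H=-d^c\omega$. For $Y$ a section of $\cal{K}$ the correction term $H(X,Y,Z)$ vanishes identically, by $\iota_Y H=0$ and skew-symmetry, so in fact $\nabla^B_X Y=\nabla^{LC}_X Y$; combined with the previous step this gives $\nabla^{LC}_X Y\in\Gamma(\cal{K})$, so $\nabla^{LC}$ preserves $\cal{K}$ as well.

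Finally, integrability is immediate: the Bismut torsion satisfies $g(T^B(X,Y),Z)=H(X,Y,Z)$, which vanishes whenever $X,Y$ are sections of $\cal{K}$, so the torsion relation $[X,Y]=\nabla^B_X Y-\nabla^B_Y X-T^B(X,Y)$ reduces to $[X,Y]=\nabla^B_X Y-\nabla^B_Y X$, which lies in $\cal{K}$ by the second step; this shows $\cal{K}$ is involutive, hence integrable by the Frobenius theorem. The argument is essentially computational, and the only point requiring genuine care is the constant-rank claim that turns $\cal{K}$ into an honest distribution rather than a pointwise family of subspaces, which is precisely where the metric nature of $\nabla^B$ (and connectedness) enters.
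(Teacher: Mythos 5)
Your proof is correct and follows essentially the same route as the paper's: constant rank via $\nabla^B$-parallel transport preserving $H$ (hence mapping $\cal{K}_p$ onto $\cal{K}_q$), invariance of $\cal{K}$ under $\nabla^B$ by expanding $(\nabla^B_Y H)(X,Z,W)=0$, the identification $\nabla^B=\nabla^{LC}$ on sections of $\cal{K}$ because the contorsion term $H(\cdot,X,\cdot)$ vanishes, and integrability from the (vanishing) torsion relation. The only cosmetic difference is that the paper deduces involutivity from $[X,Y]=\nabla^{LC}_XY-\nabla^{LC}_YX$ while you phrase it through $T^B(X,Y)=0$; these are the same observation.
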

\begin{proof}

 Assume $\cal{K} \neq 0$ and consider an open set $U$ in which its dimension is constant. Let $X$ be a local vector field in $\cal{K}$ and $Y,Z,W$ any vector fields on $M$.  Notice that $$\nabla^{LC}_Y Z = \nabla^B_Y Z - T^B(Y, Z)$$ and $\cal{K} = {\mbox {Ker}} (Y\rightarrow T^B(Y,Z)).$
From these assumptions we have $$0=(\nabla^B_W H)(X,Y,Z) = -H(\nabla^B_W X, Y, Z),$$ so $\nabla^B_W X \in \cal{K}$ and $\nabla^B_W X = \nabla^{LC}_W X \in \cal{K}.$ This means that $\cal{K}$ is parallel with respect to both connections. Then parallel transport along geodesics of an orthonormal frame shows that it has constant dimension. The integrability follows from the fact that it is parallel with respect to the Levi-Civita connection
 
\end{proof}
\vspace{.2in}

Using the previous Lemma, in the case of $(M, I, h)$ being a compact non-K\"ahler complex manifold with Bismut parallel torsion $H$ we may orthogonally decompose $TM$ as the orthogonal direct sum
\[
TM=\cal{K}\oplus\cal{K}^{\perp},
\]
where $\cal{K}^{\perp}$ is the orthogonal distribution of $\cal{K}$. Since the Levi-Civita and the Bismut connections are  both metric, it then follows that $\cal{K}^{\perp}$ is preserved by both $\nabla^B$ and $\nabla^{LC}$.


It is straightforward to observe that since $(I, h)$ is non-K\"ahler, $\cal{K}^{\perp}$ is not trivial. \\
Let $\cal{F}_1$ be the maximal sub-distribution of $\cal{K}$ preserved by both $\nabla^{LC}=\nabla^{B}$  and by the complex structure $I$, i.e. $I (\cal{F}_1)=\cal{F}_1$.
Then, we may further decompose $\cal{K}$ as the direct sum
\[
\cal{K}=\cal{F}_1 \oplus \cal{W},
\]
where $\cal{W}$ is the orthogonal complement of $\cal{F}_1$ inside $\cal{K}$ with respect to the restricted metric $h\restrict{\cal{K}}$. 

The tangent bundle isometrically splits as the direct sum of three mutually orthogonal subbundles
\[
TM=\cal{K}\oplus\cal{K}^{\perp}= \cal{F}_1 \oplus \cal{W} \oplus \cal{K}^{\perp}= \cal{F}_1 \oplus \cal{F}_2,
\]
where $\cal{F}_1$ and $\cal{F}_2=\cal{W} \oplus \cal{K}^{\perp}$ are preserved by both $\nabla^{B}$ and $\nabla^{LC}$ and are also $I$-invariant.

We mention also the following obvious:  
\begin{lemma} \label{lemma:derhamfactor}
Let $(M, h, I)$ be a compact non-K\"ahler complex manifold with parallel Bismut torsion $H$. Consider the orthogonal splitting 
\[
TM=\cal{F}_1 \oplus \cal{F}_2,
\]
described above. Then $\cal{F}_2$ does not contain any (non-trivial) sub-distribution $\cal{F}_3\subset\cal{F}_2$ such that $\cal{F}_3\subset\cal{K}$, $\nabla^{LC}\cal{F}_3\subset\cal{F}_3$ and $I(\cal{F}_3)=\cal{F}_3$.
\end{lemma}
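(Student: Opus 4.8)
The plan is to argue by contradiction, letting the maximality built into the definition of $\cal{F}_1$ do all the work. Suppose, towards a contradiction, that a nontrivial sub-distribution $\cal{F}_3 \subset \cal{F}_2$ exists satisfying $\cal{F}_3 \subset \cal{K}$, $\nabla^{LC}\cal{F}_3 \subset \cal{F}_3$ and $I(\cal{F}_3)=\cal{F}_3$. The first step is to locate $\cal{F}_3$ precisely inside the orthogonal splitting. Since $\cal{F}_3 \subset \cal{K}$ and simultaneously $\cal{F}_3 \subset \cal{F}_2 = \cal{W}\oplus\cal{K}^{\perp}$, and since $\cal{W}\subset\cal{K}$ while $\cal{K}\cap\cal{K}^{\perp}=0$, any $v\in\cal{F}_3$ decomposes as $v=w+k$ with $w\in\cal{W}$ and $k\in\cal{K}^{\perp}$; then $k=v-w\in\cal{K}\cap\cal{K}^{\perp}=0$, forcing $v=w\in\cal{W}$. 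Hence $\cal{F}_3\subset\cal{W}$.

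The second step is to upgrade the Levi-Civita parallelism of $\cal{F}_3$ to Bismut parallelism. The key observation is that on $\cal{K}$ the two connections agree when differentiating sections of $\cal{K}$: for $X\in\cal{K}$ one has $T^B(X,\cdot)=0$ by definition of $\cal{K}=\ker H$, so that $\nabla^{LC}_Y X = \nabla^B_Y X - T^B(Y,X) = \nabla^B_Y X$ for every vector field $Y$. Consequently, $\cal{F}_3\subset\cal{K}$ being $\nabla^{LC}$-parallel is automatically $\nabla^B$-parallel as well.

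Combining the two steps, $\cal{F}_3$ is a sub-distribution of $\cal{K}$ preserved by both $\nabla^{LC}=\nabla^B$ and by $I$. By the maximality in the definition of $\cal{F}_1$ (the family of $\nabla^B$- and $I$-invariant sub-distributions of $\cal{K}$ is closed under sums, so $\cal{F}_1$ is in fact the largest such distribution and contains every member of the family), this forces $\cal{F}_3\subset\cal{F}_1$. But from the first step $\cal{F}_3\subset\cal{W}$, and $\cal{W}$ is by construction the orthogonal complement of $\cal{F}_1$ inside $\cal{K}$. Therefore $\cal{F}_3\subset\cal{F}_1\cap\cal{W}=\{0\}$, contradicting the nontriviality of $\cal{F}_3$.

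I do not expect any genuine obstacle: the statement is essentially a bookkeeping consequence of the definitions, which is why the authors flag it as obvious. The only point requiring a moment's care is the identity $\nabla^{LC}=\nabla^B$ on sections of $\cal{K}$, which rests on the vanishing of the Bismut torsion along $\cal{K}$; this is exactly what makes ``$\nabla^{LC}$-parallel inside $\cal{K}$'' and ``$\nabla^B$-parallel inside $\cal{K}$'' interchangeable and thereby lets the maximality of $\cal{F}_1$ close the argument.
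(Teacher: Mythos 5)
Your proof is correct and is exactly the unpacking the authors intend when they label the lemma ``obvious'' and omit a proof: locate $\cal{F}_3$ inside $\cal{W}$ via $\cal{K}\cap\cal{K}^{\perp}=0$, use $\nabla^{LC}=\nabla^{B}$ on sections of $\cal{K}$ (which the paper itself records after the first lemma) to see that $\cal{F}_3$ satisfies the defining property of $\cal{F}_1$, and conclude $\cal{F}_3\subset\cal{F}_1\cap\cal{W}=\{0\}$ by maximality. Your parenthetical remark that the family of such sub-distributions is closed under sums (so that ``maximal'' really means ``contains all others'') is the one point worth making explicit, and you made it.
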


Now notice that Theorem 1.1 is trivially true when the metric $h$ is K\"ahler. Then for its proof we only need the following:


\begin{theorem} \label{theorem:universalcovering}
Let $(M, I)$ be a compact  non-K\"ahler complex manifold admitting a SKT and CYT  $I$-Hermitian metric  $h$ such that its  Bismut torsion $3$-form $H$  is parallel, i.e. $\nabla^B H=0$. Then, the   Riemannian holomorphic universal cover $(\tilde M,\tilde I,  \tilde h)$  of $(M, I, h)$  is holomorphically isometric to the product $(M_1,  J_1, g_1) \times (M_2,  J_2, g_2)$, where $(M_1, J_1, g_1)$ is a K\"ahler Ricci flat manifold and $(M_2, J_2,  g_2)$ is a Samelson space. \end{theorem}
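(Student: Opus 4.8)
The plan is to pass to the universal cover, split it via a de Rham–type decomposition adapted to the $\nabla^{LC}$-parallel splitting $TM = \mathcal{F}_1 \oplus \mathcal{F}_2$ constructed above, and then identify the two factors separately, using the CYT hypothesis together with the structural theorems of Zhao–Zheng and of Wang–Yang–Zheng. Since the Kähler case is already trivial, I may assume $(I,h)$ is non-Kähler, so that $\mathcal{K}^\perp \neq 0$.

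First I would lift everything to the Riemannian holomorphic universal cover $(\tilde M, \tilde I, \tilde h)$, which is complete and simply connected because $M$ is compact. The distributions $\mathcal{F}_1$ and $\mathcal{F}_2$ lift to mutually orthogonal, $\tilde I$-invariant distributions that are parallel with respect to both $\nabla^{LC}$ and $\nabla^{B}$. Being $\nabla^{LC}$-parallel and complementary on a complete simply connected manifold, the de Rham decomposition theorem yields an isometric splitting $\tilde M \cong M_1 \times M_2$ with $TM_1 = \mathcal{F}_1$ and $TM_2 = \mathcal{F}_2$. Because each $\mathcal{F}_i$ is $\tilde I$-invariant, $\tilde I$ restricts to an almost complex structure $J_i$ on each factor; integrability of $J_i$ follows since the Nijenhuis tensor of $\tilde I$ restricted to the involutive, $\tilde I$-invariant distribution $\mathcal{F}_i$ coincides with that of $J_i$ and vanishes. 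Hence the splitting is a holomorphic isometry onto a product of Hermitian manifolds $(M_1,J_1,g_1)\times(M_2,J_2,g_2)$.

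Next I would identify the two factors. Since $\mathcal{F}_1 \subset \mathcal{K} = \ker H$, the torsion $H$ restricts to zero on $M_1$, so $\nabla^{B} = \nabla^{LC}$ there and $g_1$ is Kähler; as the splitting is $\nabla^{B}$-parallel, $R^B$ has no mixed components and the Bismut–Ricci form restricts to the ordinary Ricci form on $M_1$, so $\rho^B = 0$ forces $(M_1,J_1,g_1)$ to be Kähler Ricci flat. For $M_2$, the parallelism of $\mathcal{F}_2$ for both connections shows it inherits a Hermitian structure with parallel (BTP) and closed (SKT) torsion, hence Bismut Kähler-like by Theorem \ref{theorem:zz3}, and with vanishing Bismut–Ricci curvature by the same absence of cross terms. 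If $M_2$ admitted a Kähler de Rham factor $M_3$, its tangent bundle $\mathcal{F}_3 \subset \mathcal{F}_2$ would be $\nabla^{LC}$-parallel, $I$-invariant, and contained in $\ker H = \mathcal{K}$, contradicting Lemma \ref{lemma:derhamfactor}. Thus $M_2$ is a complete BKL manifold with vanishing Bismut–Ricci curvature and no Kähler de Rham factor, so by the Zhao–Zheng theorem in its complete form (Remark \ref{remark:completeness}) it is Bismut flat; being complete, simply connected and Bismut flat, the Wang–Yang–Zheng characterization realizes $M_2$ as an open complex submanifold of a Samelson space $(G',g',J')$ with $g_2 = g'\restrict{M_2}$, and completeness makes the image closed as well as open in the connected $G'$, so $M_2 = G'$.

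The hard part will be the bookkeeping guaranteeing that each factor genuinely inherits the \emph{full} structure claimed: specifically, that $\mathcal{F}_1 \oplus \mathcal{F}_2$ is simultaneously $\nabla^{B}$- and $\nabla^{LC}$-parallel and $I$-invariant so that neither $H$ nor $R^B$ carries components mixing the two factors (this is what makes $M_1$ honestly Kähler Ricci flat and lets the CYT and BKL conditions descend to $M_2$), together with the final completeness argument promoting the open submanifold produced by Wang–Yang–Zheng to the entire Samelson space. The remaining steps are then formal consequences of the de Rham theorem and the cited structural results.
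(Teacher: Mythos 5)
Your proposal is correct and follows essentially the same route as the paper's proof: lift to the complete simply connected cover, apply the de Rham splitting along the $\nabla^{LC}$- and $\nabla^B$-parallel, $I$-invariant decomposition $\mathcal{F}_1\oplus\mathcal{F}_2$, identify $M_1$ as Kähler Ricci-flat since $\mathcal{F}_1\subset\ker H$, and conclude that $M_2$ is Bismut flat via Lemma \ref{lemma:derhamfactor} and the complete version of the Zhao--Zheng theorem, then invoke Wang--Yang--Zheng. Your extra bookkeeping (no mixed components of $H$ and $R^B$, and the open-plus-closed completeness argument forcing $M_2=G'$) only makes explicit steps the paper leaves implicit.
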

\begin{proof}
Let $\tilde M$ be the universal holomorphic Riemannian cover of $M$. By a standard argument $\tilde M$ is complete. Then the Bismut Hermitian Einstein structure $(h,I)$ of $M$ with parallel torsion $H$ lifts to a structure $(\tilde h,\tilde I,\tilde H)$ of the same kind on  the cover $\tilde M$. By the de Rham splitting Theorem, $(\tilde M, \tilde h, \tilde I)$ is holomorphically isometric to a product $ (M_1,  J_1, g_1)\times(M_2,  J_2, g_2)$ such that the decomposition $TM= \cal{F}_1 \oplus \cal{F}_2$ pulls-back to the decomposition $T\tilde M= TM_1 \oplus TM_2$. This follows from the fact that the two distributions $ \cal{F}_1$ and $\cal{F}_2$ are $I$-invariant and that, by construction, the complex structures on the distributions  pull-back to complex structures $J_1$ and $J_2$ on $TM_1$ and $TM_2$, respectively. Moreover, as $\tilde h$ is complete, also $h_1$ and $h_2$ are complete.
The splitting of the metric $\tilde h$ and the complex structure $\tilde I$ gives a splitting of the torsion $\tilde H$ as $H_1 + H_2$. Moreover, since $\cal{F}_1$ is in the kernel distribution, we must have $H_1 \equiv 0$, and, so, $ (M_1, g_1, J_1)$ is a K\"ahler Ricci-flat factor. Furthermore  $(M_2,h_2, J_2)$ is a Bismut Hermitian-Einstein structure with parallel Bismut-torsion and, by Lemma \ref{lemma:derhamfactor}, $(M_2,h_2, J_2)$ does not contain any K\"ahler de Rham factor. Hence, it must be Bismut flat by Theorem \ref{theorem:zz3} stated in the complete case. The last statement follows by the characterization of simply connected Bismut flat manifolds given in \cite[Theorem 5]{WYZ} and by the fact that the metric $h_2$ is complete. 
\end{proof}
\begin{remark}
If $\dim_{\C}(\cal{F}_1)\le 1$, then $M$ is Bismut flat. Indeed, in such a case, $M$ does not have any K\"ahler de Rham factor of dimension bigger than $1$ and so it is Bismut flat, as follows by \cite[Theorem 3]{ZZ}.
\end{remark}
\section{A construction  via mapping tori} \label{section:two}
Given a smooth manifold $M$ and a diffeomorphism $f$ of M, the mapping torus (or suspension) of $f$ is defined to be the quotient $M_f$ of the product $M \times \R$ by the $\Z$-action defined by
\[(p, t) \mapsto (f^n(p), t + n).\]
Topologically, a mapping torus is a fiber bundle over $\Sph^1$ via the natural projection $\pi : M_f \to \Sph^1$ defined by $(p, t) \mapsto e^{2\pi it}$.\\
In this section we construct a SKT metric  on the mapping torus of a product of a K\"ahler manifold  $(K,J,g)$ and the $3$-sphere $\Sph^3$ via a diagonal diffeomorphism $f=(\psi,Id_{\Sph^3})$, where $\psi$ is a K\"ahler isometry of $(K, J, g)$ (Lemma \ref{lemma:pluriclosed}). In Proposition \ref{prop:flathopf} we show that, if $g$ is Ricci flat, then the SKT structure constructed on $M_f$ is also CYT.  In particular, if $g$ is not flat, then the SKT and CYT metric is not Bismut flat. 
\begin{lemma} \label{lemma:pluriclosed}
Let $(K,J,g)$ be a compact K\"ahler manifold of complex dimension $k$ and let $\psi$ be a K\"ahler isometry, i.e., $\psi$ is an holomorphic diffeomorphism of $K$ satisfying $\psi^*(g)=g$. Then, the mapping torus 
\[
M_f=(K \times \mathbb{S}^3)_f,
\]
with $f=(\psi,Id_{\Sph^3})$, admits a SKT structure $(I, h)$.
\end{lemma}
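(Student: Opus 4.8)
The plan is to construct the SKT structure on $M_f$ explicitly by descending a suitable product-type Hermitian structure from the cover $K \times \Sph^3 \times \R$ to the $\Z$-quotient. First I would recall the standard Hermitian geometry of the factor $\Sph^3 \times \R \cong \Sph^3 \times \Sph^1$: this is the Hopf surface, a Bismut-flat (indeed Samelson) complex surface. Writing $\Sph^3$ as the unit sphere in $\C^2$ (equivalently, as the compact Lie group $SU(2)$), it carries a left-invariant complex structure $J_0$ compatible with the round metric $g_0$, and the product $\Sph^3 \times \R$ with the metric $g_0 + dt^2$ is a Samelson space, hence SKT (in fact Bismut flat). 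The key point is that this complex structure is invariant under the full isometry group used in the suspension, because $f$ acts trivially on the $\Sph^3$ factor.

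Next I would assemble the product structure on $K \times (\Sph^3 \times \R)$. Set $\tilde I = J \oplus J_0$ and $\tilde h = g \oplus (g_0 + dt^2)$. Since $(K,J,g)$ is Kähler, its torsion form vanishes, and the torsion of the product Bismut connection is just the pullback of the Hopf surface torsion $H_0$; because the Hopf surface is SKT we have $d\tilde H = 0$, so $(\tilde I, \tilde h)$ is an SKT structure on the product. I would then verify that this structure is preserved by the $\Z$-action generated by $F = (\psi, \mathrm{Id}_{\Sph^3}, t \mapsto t+1)$. This is where the hypotheses on $\psi$ are used precisely: since $\psi$ is holomorphic, $\psi_* J = J \psi_*$, so $F_* \tilde I = \tilde I F_*$; since $\psi$ is an isometry, $\psi^* g = g$, so $F^* \tilde h = \tilde h$; and the shift $t \mapsto t+1$ preserves both $J_0$ and $g_0 + dt^2$ on the trivial second factor. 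Hence $(\tilde I, \tilde h)$ is $\Z$-invariant and descends to a Hermitian structure $(I, h)$ on the quotient $M_f$.

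Finally, since the SKT condition $dd^c\omega = 0$ is a local (pointwise) differential condition on the fundamental form, and the quotient map $K \times \Sph^3 \times \R \to M_f$ is a local diffeomorphism (indeed a covering) that intertwines $(\tilde I, \tilde h)$ with $(I, h)$, the descended structure $(I,h)$ is automatically SKT as well. I would close by noting that the fundamental form $\omega$ and its torsion $H$ on $M_f$ are exactly the descents of $\tilde\omega$ and $\tilde H$, so $dH = 0$ on $M_f$.

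I expect the main obstacle to be verifying cleanly that the product complex structure $\tilde I$ is genuinely invariant under the suspension action and that the resulting object on $M_f$ is honestly a complex structure (integrability) rather than merely an almost complex structure; this reduces to checking that $J_0$ on the $\Sph^3 \times \R$ factor is integrable and $t$-shift invariant and that $\psi$ being holomorphic transfers the integrability of $J$ through the quotient. The SKT verification itself is then essentially inherited from the Hopf surface, so the geometric content lives almost entirely in the Hopf-surface factor and in the compatibility of $\psi$ with $(J,g)$.
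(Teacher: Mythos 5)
Your proposal is correct and follows essentially the same strategy as the paper: construct a $\Z$-invariant product Hermitian structure on the cover $K\times(\Sph^3\times\R)$, with the standard Hopf-surface structure on the second factor, descend it to the quotient, and observe that the SKT condition is local so it survives the covering. The only real difference is presentational --- the paper models the Hopf factor as $\C^2\setminus\{(0,0)\}$ with the metric $\tfrac{1}{R^2}g_E$ and checks $dd^c\Omega=0$ by an explicit coordinate computation, while you invoke the Samelson/Bismut-flat description of $SU(2)\times\R$ (precisely the route the paper itself takes in its later, more general lemma for arbitrary Samelson spaces); just be careful that the left-invariant complex structure lives on the four-dimensional group $SU(2)\times\R$, not on $\Sph^3$ alone.
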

\begin{proof}
Let us fix on $K \times \C^2 \setminus \{(0,0)\}$ the product complex structure $J \times J_-$, where $J_-$ is the standard complex structure on $\C^2 \setminus \{(0,0)\}$.\\
Consider the following free and proper discontinuous $\Z$-action on $K \times \C^2 \setminus \{(0,0)\}$
\begin{equation} \label{eqn:Zaction}
 n \cdot (p, \underline{x}) \mapsto (\psi^n (p), 2^n \underline{x}).
\end{equation}
The associated automorphisms $\phi_n$ are manifestly holomorphic with respect to $J \times J_-$, and so, $J \times J_-$ descends to a complex structure $I$ on  the quotient $\bigslant{K\times \C^2\setminus\{(0,0)\} }{ \Z}$.\\
We claim that $\bigslant{K\times \C^2\setminus\{(0,0)\} }{ \Z}$ is diffeomorphic to $M_f=(K \times \mathbb{S}^3)_f$. Let
$$
\alpha: K \times \Sph^3 \times \R \to K \times \C^2 \setminus \{(0,0)\}, \quad 
           (p, q, t)  \mapsto (p, 2^t \cdot q)
$$
with inverse
$$
\alpha^{-1}: K \times \C^2 \setminus \{(0,0)\} \to K \times \Sph^3 \times \R, \quad 
           (p, \underline{x}) \mapsto (p, \frac{\underline{x}}{\norm{\underline{x}}}, \log_2 \norm{\underline{x}}).
$$
The induced maps on the quotient \[\alpha: M_f \to K \times \C^2 \setminus \{(0,0)\}/ \Z, \quad \alpha^{-1}: \bigslant{K\times \C^2\setminus\{(0,0)\} }{ \Z} \to M_f,\] are well defined and give the claimed identification $\bigslant{K\times \C^2\setminus\{(0,0)\} }{ \Z}  \cong M_f$. Therefore, $M_f$ inherits from $\bigslant{K\times \C^2\setminus\{(0,0)\} }{ \Z}$ the complex structure $I$.\\
Consider
$\Omega=\omega+\omega_-,
$
where in the coordinates $(z_1,z_2)$ on $\C^2 \setminus \{(0,0)\}$ 
\begin{equation} \label{eqn:omegaminus}
\omega_-= \frac{i}{R^2} (dz_1 \wedge d \overline{z}_1 +  dz_2 \wedge d \overline{z}_2),
\end{equation}
with $R^2=z_1 \overline{z}_1+z_2 \overline{z}_2$. \\
Observe that $\Omega$ is a well defined non-degenerate global $2$-form on $\bigslant{K\times \C^2\setminus\{(0,0)\} }{ \Z} \cong M_f$, as it is preserved by $\phi_n^*$, for each $n$ in $\Z$. Furthermore, $\Omega$ is of type $(1,1)$ with respect to $I$.\\
The Hermitian metric $h= \Omega  I$ is hence given by
\begin{equation} \label{eqn:gkmetric}
h=g+\frac{1}{R^2} (dz_1d \overline{z}_1 +  dz_2  d \overline{z}_2).
\end{equation}
We compute $d^c \Omega$, where $d^c=-I dI$, and $dd^c \Omega$. 
\begin{align}
d^c  \Omega =&- \frac{1}{R^4}\left[(\overline{z}_2dz_2-z_2 d\overline{z}_2)\wedge dz_1 \wedge d\overline{ z}_1+ (\overline{z}_1dz_1-z_1 d\overline{z}_1)\wedge dz_2 \wedge d\overline{ z}_2\right], \label{eqn:H} \\
d(d^c \Omega)=&d\left( \frac{1}{R^4}\left[(\overline{z}_2dz_2-z_2 d\overline{z}_2)\wedge dz_1 \wedge d\overline{ z}_1+ (\overline{z}^1dz_1-z_1 d\overline{z}_1)\wedge dz_2 \wedge d\overline{ z}_2\right] \right) \nonumber 
= 0 \nonumber.
\end{align}
Therefore, $(I, h)$ is a SKT structure on $M_f$.
\end{proof}
We investigate some cohomological properties of $M_f$. In the next proposition we prove that  $M_f$ is formal (in the sense of Sullivan) and non-K\"ahler. 
\begin{proposition} \label{proposition:prop}
Let $M_f=(K \times \mathbb{S}^3)_f$ be the mapping torus  constructed as in Lemma \ref{lemma:pluriclosed}. Then
\begin{enumerate}
\item $M_f$ is diffeomorphic to the product of $M_\psi \times \Sph^3$, where $M_\psi$ is the mapping torus of the K\"ahler manifold $K$ with respect to the K\"ahler isometry $\psi$; \\
\item $M_f$ is formal; \\
\item $M_f$ is non-K\"ahler.
\end{enumerate}
\end{proposition}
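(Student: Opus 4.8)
The plan is to prove the three assertions in the order (1), (3), (2), since the product structure makes both the Betti number computation and the formality statement tractable. For (1) I would simply unwind the definition of the mapping torus. By construction $M_f$ is the quotient of $K\times\Sph^3\times\R$ by the $\Z$-action $(p,q,t)\mapsto(\psi^n(p),q,t+n)$, and since $f=(\psi,\mathrm{Id}_{\Sph^3})$ is the identity on the $\Sph^3$-factor, this action leaves the second coordinate fixed. Hence the action decouples and the quotient factors as $\big((K\times\R)/\Z\big)\times\Sph^3=M_\psi\times\Sph^3$, which is the asserted diffeomorphism.

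For (3) I would compute the first Betti number and invoke the classical parity obstruction. By (1) and the K\"unneth formula $b_1(M_f)=b_1(M_\psi)$, because $b_1(\Sph^3)=0$. The Wang sequence of the fibration $K\hookrightarrow M_\psi\to\Sph^1$ with monodromy $\psi$ yields $b_1(M_\psi)=1+\dim_\R\ker\big(\psi^*-\mathrm{Id}\big)\big|_{H^1(K;\R)}$, the summand $1$ coming from the base. Since $\psi$ is holomorphic, $\psi^*$ preserves the Hodge decomposition $H^1(K;\C)=H^{1,0}\oplus H^{0,1}$ and commutes with complex conjugation, which interchanges the two summands; therefore the fixed subspace of $\psi^*$ on $H^1(K;\R)$ has even real dimension. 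Consequently $b_1(M_f)$ is odd, and since every compact K\"ahler manifold has even first Betti number, $M_f$ carries no K\"ahler metric.

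For (2) I would reduce to the formality of $M_\psi$. Formality of the de Rham algebra is inherited by finite products via K\"unneth, and $\Sph^3$ is formal, so by (1) it suffices to show that $M_\psi$ is formal. The strategy is to reduce to a trivial bundle by passing to a finite cover: the closure $G=\overline{\langle\psi\rangle}$ inside the compact group $\operatorname{Isom}(K,g,J)$ is a compact abelian Lie group, so for $m=|G/G_0|$ the power $\psi^m$ lies in the identity component $G_0$, a torus, and is thus isotopic to $\mathrm{Id}_K$ through holomorphic isometries. A monodromy isotopic to the identity produces a trivial bundle, so the $m$-fold cyclic cover $M_{\psi^m}$ of $M_\psi$ is diffeomorphic to $K\times\Sph^1$, which is formal. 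It then remains to descend formality along the finite Galois cover $K\times\Sph^1\to M_\psi$, using that $\Omega^*(M_\psi)$ is the algebra of $\Z/m$-invariant forms and that the natural formality quasi-isomorphisms of a compact K\"ahler manifold (and of $\Sph^1$) can be chosen equivariantly, so that taking invariants---an exact operation over $\R$---preserves them. Equivalently, one may observe that $M_\psi$ is a compact co-K\"ahler manifold and invoke the known formality of such manifolds.

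The routine steps are (1) and (3); the genuine content lies in (2). I expect the main obstacle to be the descent of formality along the finite cover in the last step: one must verify that the zig-zag of quasi-isomorphisms exhibiting the formality of $K\times\Sph^1$ is compatible with the deck action, which combines $\psi$ on $K$ with a rotation of $\Sph^1$, so that passing to invariants yields a formality quasi-isomorphism for $M_\psi$ itself. This equivariance is precisely what makes formality---otherwise not a cover-stable property---descend here, and it relies on the naturality of the $\partial\bar\partial$-formality of the K\"ahler fibre.
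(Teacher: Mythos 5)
Your parts (1) and (3) are correct and essentially coincide with the paper's argument: (1) is the same observation that the $\Z$-action is trivial on the $\Sph^3$-factor (the paper just writes the diffeomorphism $\beta$ explicitly), and (3) computes $b_1(M_\psi)=1+\dim\ker(\psi_1^*-\id)$ exactly as the paper does via the mapping-torus (Wang) sequence. Your evenness argument for $\dim\ker(\psi_1^*-\id)$ — that $\psi^*$ preserves $H^{1,0}\oplus H^{0,1}$ and commutes with conjugation, so the complex fixed space has dimension $2\dim F^{1,0}$ — is a clean cohomology-level variant of the paper's argument, which instead passes to harmonic representatives, shows $\psi^*\alpha=\alpha$ on the nose, and uses the $J$-action on harmonic $1$-forms to produce a complex structure on $N^1$. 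Both are valid; yours avoids the harmonic-representative step.

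For (2) your primary route diverges from the paper and has a genuine gap in its last step. The reduction $\psi^m\sim\id$ via compactness of $\operatorname{Isom}_{hol}(K)$ is fine, and $M_{\psi^m}\cong K\times\Sph^1$ is formal. But the descent is not justified as stated: formality does not automatically pass from a finite cover to its quotient, and your equivariance claim rests on a misdescription of the deck action. The identification $M_{\psi^m}\cong K\times\Sph^1$ is built from the isotopy $\gamma(s)$ joining $\psi^m$ to $\id$, and under it the generating deck transformation becomes a map of the form $(p,\theta)\mapsto(h_\theta(p),\theta+1/m)$ with $h_\theta$ genuinely $\theta$-dependent — it is \emph{not} the product of $\psi$ with a rotation. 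Moreover $K\times\Sph^1$ is not K\"ahler, so there is no $\partial\bar\partial$-zig-zag on the total space to which naturality could be applied directly; one would have to build a formality zig-zag for the product and check invariance under this twisted action, which you do not do. The fallback you mention in your last sentence is exactly the paper's proof: by Li, the mapping torus $M_\psi$ of a compact K\"ahler manifold by a K\"ahler isometry is a compact co-K\"ahler manifold, and compact co-K\"ahler (cosymplectic) manifolds are formal by Chinea--de Le\'on--Marrero; then formality of $M_f\cong M_\psi\times\Sph^3$ follows from formality of products. I would either adopt that citation outright or, if you want to keep the covering-space flavour, replace the descent step by a direct argument that mapping tori of formal spaces with semisimple (here, finite-order) monodromy on cohomology are formal.
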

\begin{proof} 
Consider the map
\[
\beta: M_f  \to M_\psi \times \Sph^3, \quad 
 [(p,q,t)]  \mapsto ([(p,t)],q)
\]
with inverse 
\[
\beta^{-1}: M_\psi \times \Sph^3 \to M_f, \quad 
( [(p,t)],q) \mapsto [(p,q,t)] .
\]
It is immediate to observe that $\beta$ and $\beta^{-1}$ are well defined diffeomorphisms. Indeed,
\[
\begin{split}
\beta [(p,q,0)]&=([(p,0)],q)=([(\psi(p),1],q)=\beta [(\psi(p),q,1)], \\
\beta^{-1} ( [(p,0)],q)&=[(p,q,0)]= [(\psi(p),q,1)]=\beta^{-1} ( [(\psi(p),1)],q).
\end{split}
\]
The second statement follows by the identification $M_f \cong M_\psi \times \Sph^3$ proved above. In fact, by \cite{Li}, the mapping tori of compact K\"ahler manifolds are compact co-K\"ahler manifolds and so they are formal in the sense of Sullivan (see for instance \cite{CLM}). Since $\Sph^3$ is formal, $M_f $ is the product of formal manifolds and, hence, formal.\\
To prove that $M_f$ is non-K\"ahler is suffices to show that the first Betti number $b_1(M_f)=b_1(M_\psi)$ is odd. \\ 
The cohomology groups of mapping tori can be computed as follows
\[
              H^r (M_\psi)= N^r \oplus C^{r-1}, 
\]
where $N^r:= \ker (\psi_r^*-Id)$ and $C^{r-1}:= \operatorname{coker} (\psi_{r-1}^*-Id)$, where $\psi_r^*-Id$ is the map induced by $\psi$ at the level of the r-th cohomology groups. \\
Clearly, 
\[
 H^0 (M_\psi)= K^0 =C^0=\R,
\]
and therefore $H^1 (M_\psi)= N^1 \oplus C^{0}$. Hence, $b_1(M_\psi)=\dim (N^1)+1$. We claim that $\dim (N^1)$ is even. \\
Let $[\alpha]$ be in $\ker(\psi_1^*-Id)$. Without loss of generality, we may assume that $\alpha$ is the harmonic representative of its cohomology class, as $K$ is compact by hypothesis. As $[\alpha]$ is in  $\ker(\psi_1^*-Id)$, $\psi_1^*(\alpha)=\alpha + d \eta$ for some smooth function $\eta$ on $K$. Let us denote by $( \cdot, \cdot)$ the $L^2$ product defined on $(K,g)$ where $g$ is the K\"ahler metric  on $K$.  We want $d\eta$ to be zero. Indeed,
\[
\begin{split}
0&=(\psi^*(\Delta \alpha), d \eta)=(\Delta(\psi^*\alpha), d \eta)=(\Delta(\alpha+d\eta), d \eta)=\\
&=(\Delta(d\eta), d \eta)=(dd^*d\eta, d\eta)=(d^*d\eta, d^*d\eta)= \norm{d^*d\eta}^2,\\
\end{split}
\]
where the second equality holds since the pullback by an isometry commutes with the laplacian operator.\\
Since $d^*d\eta=0$, then $\Delta(d\eta)=0$. Hence, $\alpha$ and $\alpha+d\eta$ are two harmonic representatives of the same cohomology class. By uniqueness, $d\eta=0$.\\
We just proved that, if $[\alpha]$ is in the $\ker(\psi_1^*-Id)$ and $\alpha$ is the harmonic representative, then $\psi^*\alpha=\alpha$. \\
Recall that on K\"ahler manifolds, $J$ induces a map
$$
    J: \cal{H}^1(K) \to \cal{H}^1(K),  \quad \gamma \mapsto J\gamma,
$$
where $J \gamma(X)=\gamma (JX)$ and $\cal{H}^1(K)$ is the vector space of harmonic $1$-forms. Observe that the latter map is well defined since on K\"ahler manifolds $J$ commutes with the laplacian operator. \\ We claim that if $\alpha$ is the harmonic representative of $[\alpha] \in \ker(\psi^*_1-Id)$, then also $[J\alpha]$ is in $\ker(\psi^*_1-Id)$. In fact, if we apply $\psi^*$ to $J\alpha$ we get
\[
   \psi^*(J\alpha)=J(\psi^* \alpha)=J\alpha.
\]
Then the dimension of $N^1$ must be even. 
\end{proof}

\begin{proposition} \label{prop:flathopf}
Let $M_f$ be the mapping torus $M_f=(K \times \mathbb{S}^3)_f$ constructed as in Lemma \ref{lemma:pluriclosed}. If  the K\"ahler metric  $g$  on $(K, J)$ is (non-flat)  Ricci flat, then  the  Bismut connection associated to the SKT metric $h$  on $(M_f, I)$,  constructed in Lemma \ref{lemma:pluriclosed} is  CYT  with non-flat Bismut connection. 
\end{proposition}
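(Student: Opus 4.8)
The plan is to exploit the product structure visible on the cover $K\times(\C^2\setminus\{(0,0)\})$ used in Lemma~\ref{lemma:pluriclosed}, and to show that the Bismut geometry of $(M_f,I,h)$ is, locally, the direct sum of the (trivial) Bismut geometry of the K\"ahler factor $(K,J,g)$ and that of a Bismut flat Hopf factor. On $K\times(\C^2\setminus\{(0,0)\})$ the complex structure $I=J\times J_-$ is a product and, by \eqref{eqn:gkmetric}, the metric is the Riemannian product $h=g+h_-$ with $h_-=\tfrac{1}{R^2}(dz_1 d\overline{z}_1+dz_2 d\overline{z}_2)$, while $\Omega=\omega+\omega_-$. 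Consequently the Levi-Civita connection splits as a product, and the Bismut torsion $H=-d^c\Omega$ splits as $H_1+H_2$ with $H_1=-d^c\omega=0$ (since $g$ is K\"ahler) and $H_2=-d^c\omega_-$ as in \eqref{eqn:H}. Since $\nabla^B=\nabla^{LC}-\tfrac12 H$, the Bismut connection is the direct sum of the Bismut connections of the two factors, and hence so is its curvature, $R^B=R^B_1\oplus R^B_2$.

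First I would identify the two factors. On the K\"ahler factor the Bismut connection coincides with the Chern (Levi-Civita) connection, so $R^B_1=R^{LC}_K$ and the first Bismut Ricci form $\rho^B_1$ equals the Ricci form of $(K,J,g)$. On the second factor, the map $\alpha$ of Lemma~\ref{lemma:pluriclosed} identifies $(\C^2\setminus\{(0,0)\},J_-,h_-)$ with $\Sph^3\times\R=SU(2)\times\R$ carrying the bi-invariant product metric, i.e.\ with the Samelson space underlying the Hopf construction; by the characterization of Bismut flat manifolds in \cite{WYZ} this factor is Bismut flat, so $R^B_2\equiv 0$.

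Then I would conclude the CYT property. Choosing an orthonormal frame adapted to the orthogonal splitting $TM=\cal{F}_1\oplus\cal{F}_2$, and using that $R^B=R^B_1\oplus R^B_2$ is block diagonal while $I$ preserves each block, the trace defining $\rho^B$ splits as $\rho^B=\rho^B_1+\rho^B_2$. Here $\rho^B_1=0$ because $g$ is Ricci flat and $\rho^B_2=0$ because the Hopf factor is Bismut flat; hence $\rho^B=0$ and $(I,h)$ is CYT. Since every object above is invariant under the $\Z$-action \eqref{eqn:Zaction}, all of this descends to $M_f$. Finally, for non-flatness it suffices to observe that $R^B_1=R^{LC}_K\neq 0$: a Ricci flat but non-flat K\"ahler metric has nonvanishing curvature tensor, so $R^B\neq 0$ and the Bismut connection of $(M_f,I,h)$ is non-flat. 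The main obstacle is the careful justification of the product splitting of $\nabla^B$ and of its curvature, together with the clean splitting of the Bismut Ricci trace along the adapted frame; once the Hopf factor is recognized as a Bismut flat Samelson space, the remaining vanishing statements are immediate.
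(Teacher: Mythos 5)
Your proposal is correct and follows essentially the same route as the paper: both exploit the product structure on the cover $K\times(\C^2\setminus\{(0,0)\})$, observe that the torsion lives entirely on the Hopf factor so that the Bismut curvature splits as $R^{LC}_K\oplus R^B_2$ with $R^B_2=0$ (the paper cites \cite[Lemma 3]{WYZ} for this vanishing, deriving the splitting via the formula for $R^B$ in terms of $R^{LC}$ and $H$ from \cite[Proposition 3.18]{GFS} rather than asserting it at the level of connections), and then trace to get $\rho^B=\rho^{LC}_K=0$, with non-flatness coming from $R^{LC}_K\neq 0$ and everything descending by $\Z$-invariance.
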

\begin{proof}
Consider the product metric  $\tilde {g}: = g+ \frac{g_E}{R^2}$ on $K \times \C^2 \setminus \{0\}$, with $g$ being the K\"ahler metric of $K$, $g_E$ being the Euclidean metric of $\C^2$ and $R^2= z_1 \overline{z}_1 +  z_2 \overline{z}_2$, where $(z_1,z_2)$ are  the standard coordinates of $\C^2$.  \\
Since   $(J,g, \omega)$ is K\"ahler, the Bismut connection of $(J, g, \omega)$  is the Levi-Civita  $\nabla^{LC}_1$.
Analogously, we denote by  $\nabla^B_{2}$ the Bismut connection of the Hermitian structure $(\frac{g_E}{R^2},J_-,\omega_-)$ on $ \C^2 \setminus \{0\}$. $\nabla^B_{2}$ is defined by the relation
\[
    \frac{g_E}{R^2}( (\nabla^B_{2})_{X} Y,Z ) = \frac{g_E}{R^2}((\nabla^{LC}_{2})_{X}Y,Z)- \frac{1}{2} d^c_{J_{-}}\omega_-(X,Y,Z),
\]
for any $X,Y,Z \in \mathfrak{X}(\C^2 \setminus \{0\})$.  In the following, we will denote by $H_2$ the torsion $3$-form $- d^c_{J_{-}}\omega_-$, which is actually closed, as already computed in the proof of Lemma \ref{lemma:pluriclosed}. \\
It is straightforward to observe that the fundamental form $\tilde \omega=\omega+\omega_-$ of the Hermitian manifold $(K \times \C^2 \setminus \{0\}, \tilde{J}:=J \times J_-, \tilde {g})$ satisfies $$d^c_{\tilde {J}}( \omega+\omega_-)=d^c_{J_-}\omega_-=-H_2.$$
Consider  the  unique metric connection  $\tilde {\nabla}$ on $(K \times \C^2 \setminus \{0\}, \tilde g, \tilde J)$ with skew-symmetric torsion $H_2$ determined by
\[
    \tilde g (\tilde {\nabla}_{X}Y,Z)=  \tilde g  (\tilde {\nabla}^{LC}_{X}Y,Z)- \frac{1}{2} d^c_{J_{-}}\omega_-(X,Y,Z),  \ \forall X,Y,Z \in \mathfrak{X}(K \times \C^2 \setminus\{0\} ), 
\]
where $\tilde \nabla^{LC}$ is the Levi-Civita connection of the product metric $ \tilde g = g+ \frac{g_E}{R^2}$.  It turns out that $\tilde \nabla$ is the Bismut connection of  $(K \times \C^2 \setminus \{0\}, \tilde J, \tilde g)$, since  $d^c_{\tilde J} \tilde \omega = d^c_{J_-}\omega_-$, as already remarked. Hence, from now on we set $\tilde \nabla=\tilde \nabla^B$ and we will denote by $\tilde H$ the torsion $3$-form $-d^c_{\tilde J} \tilde \omega$ of $\tilde \nabla^B$. \\
Let now compute  the Bismut curvature tensor $\tilde {R}^B$ . 
When the torsion in a closed $3$-form, we have a general formula to compute the Bismut curvature tensor $\tilde R^B$ in terms of the Riemannian curvature tensor $\tilde R^{LC}$ (see for instance \cite[Proposition 3.18]{GFS}). \\
For any quadruple $(X,Y,Z,W)$  of vector fields on $K \times \C^2 \setminus \{0\}$, it holds that:
 \begin{align*}
 \tilde {R}^B (X,Y,Z,W) =& \quad\tilde {R}^{LC} (X,Y,Z,W) 
+ \frac{1}{2} \tilde \nabla^{LC}_{X}\tilde H(Y,Z,W) - \frac{1}{2} \tilde \nabla^{LC}_{Y}\tilde H(X,Z,W) \\[4pt]
&- \frac{1}{4}\ \tilde g (\tilde H(X,W),\tilde H(Y,Z))  +\frac{1}{4} \ \tilde g (\tilde H(Y,W),\tilde H(X,Z)).
 \end{align*}
Since $\mathfrak{X}(K \times \C^2 \setminus \{0\}) \cong \mathfrak{X}(K) \oplus \mathfrak{X}(\C^2 \setminus \{0\})$ as $C^{\infty}(K \times \C^2 \setminus \{0\})$ module, by the $C^{\infty}(K \times \C^2 \setminus \{0\})$-multi linearity of the curvature tensor we may compute $\tilde R^B$ on vector fields of the kind $X_1+X_2$, where $X_1 \in  \mathfrak{X}(K) $ and $X_2 \in  \mathfrak{X}(\C^2 \setminus \{0\})$. Furthermore, by construction, $\tilde H= H_2$. Therefore  $\tilde R^B (X_1+X_2,Y_1+Y_2, Z_1+Z_2,W_1+W_2)$ is given by 
 
 \begin{align*}
\tilde {R}^B(X_1+X_2,Y_1+Y_2,Z_1+Z_2,W_1+W_2) =&\quad \tilde R^{LC}(X_1+X_2,Y_1+Y_2,Z_1+Z_2,W_1+W_2) \\[4pt]
&+ \frac{1}{2} \tilde \nabla^{LC}_{X_1+X_2}\tilde H(Y_1+Y_2,Z_1+Z_2,W_1+W_2)\\[4pt]
&- \frac{1}{2} \tilde \nabla^{LC}_{Y_1+Y_2}\tilde H(X_1+X_2,Z_1+Z_2,W_1+W_2) \\[4pt]
&- \frac{1}{4} \tilde g(\tilde H(X_1+X_2,W_1+W_2),\tilde H(Y_1+Y_2,Z_1+Z_2)) \\[4pt]
&+\frac{1}{4} \tilde g (\tilde H(Y_1+Y_2,W_1+W_2),\tilde H(X_1+X_2,Z_1+Z_2)),
 \end{align*}
  which reduces  to 
 \begin{align*}
\tilde {R}^B(X_1+X_2,Y_1+Y_2,Z_1+Z_2,W_1+W_2) =& \quad  R^{LC}_{1}(X_1,Y_1,Z_1,W_1)+R^{LC}_{2}(X_2,Y_2,Z_2,W_2) \\
&+ \frac{1}{2} \left(\nabla^{LC}_ 2\right)_ {X_2}H_2(Y_2,Z_2,W_2)\\
&- \frac{1}{2} \left(\nabla^{LC}_2\right)_{Y_2}H_2(X_2,Z_2,W_2) \\
& - \frac{1}{4} \frac{g_E}{R^2}(H_2(X_2,W_2),H_2(Y_2,Z_2))\\
&  +\frac{1}{4} \frac{g_E}{R^2}(H_2(Y_2,W_2),H_2(X_2,Z_2)).
  \end{align*}
  Therefore 
\begin{align*}
\tilde R^B (X_1+X_2,Y_1+Y_2, Z_1+Z_2,W_1+W_2) =&R^{LC}_{1}(X_1,Y_1,Z_1,W_1)+R^B_{2}(X_2,Y_2,Z_2,W_2) \\
 =& R^{LC}_{1}(X_1,Y_1,Z_1,W_1),
 \end{align*}
where the last equality follows  from the fact that the Bismut curvature tensor $R^B_{2}$ of $(\frac{g_E}{R^2}, J_-, H_2)$ vanishes (for a reference see for instance \cite[Lemma 3]{WYZ}).\\
We are now ready to compute the Ricci form $\tilde \rho^B$ of the Bismut connection $\tilde \nabla^B$. As already done before, without loss of generality, we may compute $\tilde \rho^B$ on decomposable vector fields. By definition 
\[
\begin{split}
\tilde \rho^B (X_1+X_2,Y_1+Y_2)=& \frac{1}{2}  \sum_{i=1}^{2k+4} \tilde R^B (X_1+X_2,Y_1+Y_2,J e_i ,e_i)\\
=& \frac{1}{2} \sum_{i=1}^{2k} R^B_1 (X_1,Y_1,J e_i ,e_i) =  \rho^{LC}_{1} (X_1,Y_1) =0,
\end{split}            
\]
where $  \rho^{LC}_{1}$ is the Ricci form of $(K,g)$,\ $\{e_1,\dots,e_{2k}\}$ is a local orthonormal basis of $TK$, and $\{e_{2k+1},\dots,e_{2k+4} \}$ is a local orthonormal basis of $T\C^2 \setminus\{0\}$. 	\\
For each $n \in \Z$, the diffeomorphism $\phi_n$ is a holomorphic isometry of the product metric $\tilde g$, with respect to the product complex structure $\tilde J$. This suffices to show that also the torsion form is preserved by $\phi_n^*$, as $\tilde H=\tilde J d \tilde \omega$ .\\
In particular, the triple $( \tilde g , \tilde J , \tilde H)$ descends to the mapping torus $M_f$ to $(h, I, -d^c \Omega)$, where $(h, I, \Omega)$ is the SKT structure constructed in Lemma \ref{lemma:pluriclosed}. We then have that  $(M_f, h, I)$ is a SKT and CYT manifold with non-flat  Bismut connection.
\end{proof}
\begin{remark} \label{remark:remark}
Observe that the torsion $H=-d^c \Omega$ is always parallel with respect to the Bismut connection. Since the Bismut curvature tensor satisfies the First Bianchi Identity and the Hermitian structure is SKT, the statement follows by Theorem \ref{thm:finotardini}.
\end{remark}
\begin{remark}
Let $K$ be a $K3$ surface. Then the Riemannian holomorphic universal cover of $(M_f=(K \times \Sph^3)_f,  I, h)$ is given by $(K \times \Sph^3 \times \R, \tilde g, \tilde   J)$, where $\tilde g$ is the product metric $g + \frac{g_E}{R^2} $ and $\tilde J$ is the product complex structure $ J \times J_-$. In the notation of the Theorem \ref{theorem:universalcovering}, $G= \Sph^3 \times \R$.
\end{remark}
By Proposition \ref{prop:flathopf}, we are mainly interested in the case of $K$ being a $K3$ surface. In particular, we want to show that when $K$ is a $K3$ surface, the CYT and SKT mapping tori $M_f$ are not trivial, i.e., they do not split to a product of a Ricci flat K\"ahler manifold with a Bismut flat one. To do so, we briefly recall the following renowned result in the Theory of $K3$ surfaces.
\begin{theorem}[Torelli Theorem \cite {BR, PS}]\label{theorem:torelli}
Let $K,K'$ be $K3$ surfaces and let $\Omega_K, \Omega_{K'}$ be nowhere vanishing holomorphic $2$-forms on $K$ and $K'$, respectively. Assume that there exists an isometry of lattices
\[
\alpha:H^2(K,\Z) \to H^2(K',\Z) 
\]
satisfying \\
\begin{enumerate}
\item $\alpha ([\Omega_k])=c \cdot [\Omega_{K'}]$, for some $c \in \C^*$, \\
\item $\alpha$ sends a K\"ahler class of $K$ to a K\"ahler class of $K'$. \\
\end{enumerate}
Then there exists a unique isomorphism of $K3$ surfaces $g:K'\to K$ such that $g^*=\alpha$ on $H^2(K,\Z)$.
\end{theorem}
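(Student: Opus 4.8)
This is the strong Torelli theorem for $K3$ surfaces, a deep classical result; rather than reproving it from scratch I would recall the architecture of the standard proof (Piatetski-Shapiro–Shafarevich in the algebraic case, Burns–Rapoport and Looijenga–Peters in the Kähler setting). The plan splits into an easy uniqueness half and a hard existence half. For uniqueness, I would use that a $K3$ surface carries no nonzero holomorphic vector fields, so its automorphism group acts faithfully on $H^2(K,\Z)$: an automorphism inducing the identity on $H^2$ fixes the holomorphic $2$-form, and the holomorphic Lefschetz fixed-point formula forces it to be the identity. Hence if two isomorphisms realize the same $\alpha$, their composite induces the identity on $H^2$ and is trivial, so it suffices to produce one $g$.

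For existence, I would pass to the period-map machinery. Fix the $K3$ lattice $L=U^{\oplus 3}\oplus E_8(-1)^{\oplus 2}$ and the period domain
\[
D=\{[x]\in\mathbb{P}(L\otimes\C)\ :\ \langle x,x\rangle=0,\ \langle x,\bar x\rangle>0\}.
\]
Marking $K$ and $K'$ by isometries to $L$ sends $[\Omega_K]$ and $[\Omega_{K'}]$ into $D$, and hypothesis (1) says precisely that $\alpha$ identifies these two periods. The two pillars I would invoke are the \emph{local Torelli theorem} --- the period map is a local isomorphism because contraction with $\Omega_K$ identifies the Kodaira–Spencer space $H^1(K,T_K)$ with $H^1(K,\Omega^1_K)$, making the differential bijective --- and the \emph{surjectivity of the period map}. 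Combined with connectedness of the moduli of marked $K3$ surfaces, these reduce existence to showing that two marked $K3$ surfaces with equal period point are isomorphic.

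The heart of the matter is then handled through Kummer surfaces. For a complex $2$-torus $T$ the classical Torelli theorem turns a Hodge isometry into an isomorphism of tori, which descends to the Kummer surface $\mathrm{Km}(T)$; tracking the sixteen exceptional $(-2)$-curves and the induced action on $H^2$ yields the strong Torelli statement on the locus of Kummer periods, which is dense in $D$. I would then propagate to an arbitrary pair $(K,K')$ by a specialization argument, embedding both surfaces in families over moduli and using density of the Kummer locus together with separatedness of the moduli stack to deform the Kummer isomorphism into the desired $g$.

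Finally, hypothesis (2) is what selects a genuine biholomorphism among the many lattice isometries. The positive cone in $H^{1,1}(K,\R)$ has two components and is further subdivided by reflections in classes of square $-2$, so a Hodge isometry a priori determines $g$ only up to such reflections and an overall sign; requiring $\alpha$ to carry a Kähler class to a Kähler class pins down the Kähler chamber and guarantees $g^*=\alpha$ exactly. I expect the main obstacle to be the two genuinely deep geometric inputs --- surjectivity of the period map (which in the Kähler formulation rests on Yau's solution of the Calabi conjecture) and the global propagation from the Kummer locus --- whereas local Torelli and the uniqueness step are comparatively formal.
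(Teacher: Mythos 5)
The paper does not prove this statement: it is quoted as a classical result (the strong Torelli theorem for K\"ahler $K3$ surfaces) with attribution to Burns--Rapoport and Pyatetskii-Shapiro--Shafarevich, and is only used as a black box to establish Proposition \ref{proposition:K3}. Your outline --- uniqueness via faithfulness of the $\operatorname{Aut}$-action on $H^2$ (no holomorphic vector fields plus the holomorphic Lefschetz formula), existence via local Torelli, surjectivity of the period map, density of Kummer periods and propagation, with hypothesis (2) selecting the correct K\"ahler chamber --- is a faithful summary of the standard proof in exactly those references, so there is no divergence to report.
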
 
We use the previous Theorem to show the following Proposition.
\begin{proposition} \label{proposition:K3}
Let $K$ be a $K3$ surface admitting an automorphism $\psi \neq Id_K$. Then $\dim(N^2_\psi)<22$, where $N^2_\psi=\ker(\psi_2^*-Id)$. 
\end{proposition}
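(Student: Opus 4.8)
The plan is to prove the contrapositive via the uniqueness clause of Theorem~\ref{theorem:torelli}. Since $K$ is a $K3$ surface, its second Betti number equals $22$, so $H^2(K,\R)$ has real dimension $22$ and the fixed subspace $N^2_\psi=\ker(\psi_2^*-\id)$ satisfies $\dim N^2_\psi\le 22$, with equality precisely when $\psi_2^*$ acts as the identity on all of $H^2(K,\R)$. Thus it suffices to show that $\psi\neq\id_K$ forces $\psi_2^*\neq\id$.

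First I would reduce the real statement to an integral one. Assume, for contradiction, that $\dim N^2_\psi=22$, i.e.\ that $\psi_2^*=\id$ on $H^2(K,\R)$. As $K3$ surfaces have torsion-free second cohomology, $H^2(K,\Z)$ is a full-rank lattice inside $H^2(K,\R)$, so an automorphism fixing $H^2(K,\R)$ pointwise must fix $H^2(K,\Z)$ pointwise; hence $\psi_2^*=\id$ on $H^2(K,\Z)$ as well. Note also that, since $\psi$ is holomorphic, $\psi_2^*$ is automatically an isometry of the lattice $H^2(K,\Z)$ preserving the Hodge decomposition, so it is a legitimate input for the Torelli theorem.

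Next I would feed $\alpha:=\psi_2^*=\id$ into Theorem~\ref{theorem:torelli} with $K'=K$. The map $\alpha=\id$ is trivially an isometry of the lattice $H^2(K,\Z)$; it satisfies condition (1) with $c=1$, since $\id([\Omega_K])=[\Omega_K]$, and condition (2), since the identity sends any K\"ahler class of $K$ to itself. The theorem then provides a \emph{unique} isomorphism $g:K\to K$ with $g^*=\id$ on $H^2(K,\Z)$. Both $\id_K$ and $\psi$ realize this $\alpha$ (the latter by our assumption $\psi_2^*=\id$), so by the uniqueness statement $\psi=\id_K$, contradicting $\psi\neq\id_K$.

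Therefore $\psi_2^*\neq\id$ and $\dim N^2_\psi<22$, as claimed. No step here is genuinely delicate: the only points requiring care are to invoke the uniqueness part of the Torelli theorem correctly, and to pass from the real to the integral fixed locus using the torsion-freeness of $H^2(K,\Z)$ for a $K3$ surface.
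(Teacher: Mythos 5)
Your proof is correct and follows essentially the same route as the paper: assume $\dim N^2_\psi=22$, deduce $\psi^*=\id$ on $H^2(K,\Z)$, and invoke the uniqueness clause of the Torelli theorem with $\alpha=\id$ to force $\psi=\id_K$, a contradiction. Your explicit verification of conditions (1) and (2) and the remark on torsion-freeness of $H^2(K,\Z)$ are welcome details the paper leaves implicit, but the argument is the same.
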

\begin{proof}
By contradiction, assume that $\dim(N^2_\psi)=22$, i.e., $\psi^*=Id$ on $H^2(K,\R)$. Then, the restriction $\psi^*\restrict{H^2(K,\Z)}=Id_{H^2(K,\Z)}$ satisfies the hypothesis of Theorem \ref{theorem:torelli}. Hence, there exists a unique automorphism $g$ of $K$ such that $g^*\restrict{H^2(K,\Z)}=Id_{H^2(K,\Z)}$. By uniqueness, $g=Id_K$. \\
Moreover, since $\psi$ is another automorphism of $K$ which restricted to $H^2(K,\Z)$ is the identity, we must have $\psi=Id_K$. The contradiction follows.
\end{proof}
\begin{corollary} \label{corollary:nonproduct}
Assume that $K$ is a $K3$ surface admitting a non-trivial K\"ahler isometry $\psi$, i.e., $\psi\neq Id_K$. Then the mapping torus $M_f=(K \times \Sph^3)_{(\psi,Id_{\Sph^3})}$ constructed in Lemma \ref{lemma:pluriclosed} is never trivial.
\end{corollary}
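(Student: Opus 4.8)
The plan is to separate $M_f$ from the smooth product $K \times (\Sph^3 \times \Sph^1)$---the underlying manifold of $K$ times the Hopf surface---by exhibiting a diffeomorphism invariant on which they disagree. The two spaces are both $8$-dimensional and agree in their lowest Betti numbers (one checks $b_0 = b_1 = 1$ on each), so the second Betti number $b_2$ is the natural candidate, and it is precisely here that the nontriviality of $\psi$ should register.

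First I would invoke Proposition \ref{proposition:prop}(1) to replace $M_f$ by the diffeomorphic manifold $M_\psi \times \Sph^3$ and then apply the K\"unneth formula. Since $H^*(\Sph^3)$ is supported in degrees $0$ and $3$, this yields $b_2(M_f) = b_2(M_\psi)$ at once. Next, using the mapping-torus cohomology formula $H^r(M_\psi) = N^r \oplus C^{r-1}$ recalled in the proof of Proposition \ref{proposition:prop}, together with $b_1(K) = 0$ for a K3 surface---so that the coker contribution $C^1 = \operatorname{coker}(\psi_1^* - Id)$ vanishes---I obtain $b_2(M_\psi) = \dim N^2_\psi$, and hence $b_2(M_f) = \dim N^2_\psi$.

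I would then apply Proposition \ref{proposition:K3}: as $\psi \neq Id_K$, it gives $\dim N^2_\psi < 22$, so that $b_2(M_f) < 22$. On the other hand, a direct K\"unneth computation for $K \times \Sph^3 \times \Sph^1$, starting from $b_*(K) = (1,0,22,0,1)$ and the Betti numbers of the Hopf surface $\Sph^3 \times \Sph^1$, gives $b_2(K \times \Sph^3 \times \Sph^1) = 22$. Because $b_2$ is a diffeomorphism invariant and these two values differ, $M_f$ cannot be diffeomorphic to a global product of $K$ with the Hopf surface; that is, $M_f$ is never trivial.

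The genuine content of the argument is entirely front-loaded into Proposition \ref{proposition:K3}, which rests on the Torelli Theorem: the delicate point is that a nontrivial automorphism of a K3 surface must act nontrivially on $H^2$, forcing the strict inequality $\dim N^2_\psi < 22$. Once this is granted, everything else is routine K\"unneth bookkeeping; the only mild care required is to note that the second Betti number is insensitive to the specific Hopf surface chosen (each being smoothly $\Sph^3 \times \Sph^1$) and that the coker term $C^1$ truly vanishes owing to $b_1(K3) = 0$.
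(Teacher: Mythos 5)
Your proposal is correct and follows essentially the same route as the paper: identify $M_f$ with $M_\psi\times\Sph^3$ via Proposition \ref{proposition:prop}, compute $b_2(M_f)=\dim N^2_\psi+\dim C^1_\psi=\dim N^2_\psi<22$ using Proposition \ref{proposition:K3} and $b_1(K)=0$, and contrast this with $b_2(K\times\Sph^3\times\Sph^1)=22$ from K\"unneth. The only difference is expository (your preliminary remark that $b_0,b_1$ cannot distinguish the two spaces), so there is nothing substantive to add.
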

\begin{proof}
By the K\"unneth formula, $H^2(K \times \Sph^3 \times \Sph^1, \R)\cong H^2(K,\Z)\cong \R^{22}$. \\
We claim that $\dim(H^2(M_f,\R))<22$.  Using Proposition \ref{proposition:prop}, $M_f \cong M_\psi \times \Sph^3$ and so, again by K\"unneth formula, $H^2(M_f, \R)\cong H^2(M_\psi,\Z)\cong N^2_\psi \oplus C^1_\psi$, where $N^2_\psi=\ker(\psi_2^*-Id)$ and $C^1_\psi=\operatorname{coker}(\psi_1^*-Id)$.  Since $\psi$ is an automorphism of $K$ different from the identity, $\dim( N^2_\psi)<22$ by Proposition \ref{proposition:K3}, and $\dim(C^1_\psi)=0$ as $H^1(K,\R)=0$.  It then follows that $\dim(H^2(M_f, \R))<22$, concluding the proof. 
\end{proof}
\begin{remark}
Since any normal subgroup of $\pi_1(M_f) \cong \Z$ of finite index is of the kind $k\Z$ for some integer $ k > 1$, it follows that any finite cover of $M_f=(K \times \Sph^3)_{(\psi,Id_{\Sph^3})}$ corresponds to a quotient of $K \times \C^2 \setminus \{(0,0)\}$ by the action of $k\Z$ given by
\[
kn \cdot (p, \underline{x}) \mapsto (\psi^{kn} p, 2^{kn} \underline{x}).
\]
Moreover, since $\psi$ has finite order, there exists a $\overline{k} > 1 $ such that $\psi^{\overline{k}}=Id$, i.e., there exists a finite cover of $M_f$ which splits as a product of $K$ and the Hopf surface $\Sph^3 \times \Sph^1$.
\end{remark}


\section{A generalization of local Samelson spaces}

Let $(G'=G \times \R^n, g'=b+g_E, J_L)$ be a Samelson space, i.e., $G$ is a compact, connected and simply connected semisimple Lie group, $g'=b+g_E$ is the bi-invariant metric on $G'$ given by the product of the bi-invariant metric $b$ on $G$ and the euclidean metric $g_E$, and $J_L$ is a left invariant complex structure compatible with $g'$. We assume that $\dim(G) \ge 3$ and $\dim(G')=2r$.\\
Let $\rho$ be a group homomorphism $\rho: \Z^n \to {\mbox {Isom}}(G)$. Then, for each $m \in \Z^n$ the diffeomorphism $m \cdot (q,t)= (\rho(m)q, t+m)$ is an isometry of $g'$.\\
 Furthermore, consider a compact K\"ahler manifold $(K,g,J)$ and a group homomorphism $\psi: \Z^n \to {\mbox {Isom}}_{hol}(K)$, where ${\mbox{Isom}}_{hol}(K)$ is the group of holomorphic K\"ahler isometries of $(K,g,J)$. \\
For the sake of clarity, we give an example of the group homomorphism $\psi$. Let $\varphi$ be a K\"ahler isometry of $K$. We may consider the map
$$
\psi:\Z^n \to {\mbox {Isom}}_{hol}(K),  m=(z_1,\dots,z_n) \mapsto \varphi^\abs{m}=\varphi^{z_1+\dots+z_n}.
$$
Then, $\psi(0)=\varphi^0=Id$ and $\psi(m+m')=\varphi^{z_1+w_1+\dots+z_n+w_n}=\varphi^\abs{m} \circ \varphi^\abs{m'}$. Hence $\psi$ is a group homomorphism. \\
We define the following $\Z^n$-action on $K \times G \times \R^n$ as $$m \cdot (p,q,t)=(\psi(m)p,\rho(m)q,t+m).$$ The action is clearly free, as
\[
(\psi(m)p,\rho(m)q,t+m)=(p,q,t) \implies t+m=t \iff m=0.
\]
We claim that the action is also properly discontinuous. First, observe that the action is manifestly smooth, as $\Z^n$ acts by isometries. Moreover, it is also easy to verify that the action is proper. Indeed, let $(p,q,t), (p',q',t') \in K \times G'$. Since the action (by translations) of $\Z^n$ on $\R^n$ is proper, there exist $I,I'$ open neighborhood of $t$ and $t'$ respectively such that the set $\Gamma:=\{ m \in \Z^n \ | \ (m+I)  \cap I' \neq \emptyset\}$ is finite. \\
Consider $U$ and $U'$ any open neighborhoods of $p$ and $p'$ and $V$ and $V'$ any open neighborhoods of $q$ and $q'$, respectively, and let $m \notin \Gamma$. Then
\[
\begin{split}
\big(m \cdot U \times V \times I \big) \cap U' \times V' \times I'&= \big( \psi(m) \, U \times \rho(m) V \times (m+I) \big) \cap U' \times V' \times I'\\
&=\big(\psi(m) \, U  \cap U' \big) \times  \big( \rho(m) V \cap V' \big) \times \big( (m+I) \cap I'\big) =\emptyset.
\end{split}
\]
It follows that 
\[
\Lambda:=\{ m \in \Z^n \ | \ \big(m \cdot U \times V \times I \big) \cap U' \times V' \times I'  \neq \emptyset\} \subset \Gamma,
\]
and therefore, since $\Gamma$ is finite, so is $\Lambda$.
\begin{lemma} \label{lemma:pluriclosed2}
Let $(G'=G \times \R^{n}, g'=b+g_E, J_L)$  be a Samelson space and let $\rho$ be a group homomorphism $\rho: \Z^n \to {\mbox {Isom}}(G)$ such that for each $m \in \Z^n$ the 
diffeomorphism $$m:G' \to G', \  (q,t) \mapsto (\rho(m)q, t+m)$$ is an holomorphic isometry of $(g', J_L)$. Let $(K,g,J)$ be a compact K\"ahler manifold and let $\psi$ be a group homomorphism $\psi: \Z^n \to {\mbox {Isom}}_{hol}(K)$, where ${\mbox{Isom}}_{hol}(K)$ is the group of holomorphic K\"ahler isometries of $(K,g,J)$.  
Then the quotient
\[
M_{\psi,\rho}= (K \times G \times \R^n)_{\psi,\rho} = \bigslant{K \times G \times \R^n} {\Z^n},  
\]
where $\Z^n$ acts freely and properly discontinuously  on $K \times G \times \R^n$ as \[m \cdot (p,q,t)=(\psi(m)p,\rho(m)q,t+m),\] admits a SKT structure $(I, h)$.
\end{lemma}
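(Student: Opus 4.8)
The plan is to follow the blueprint of Lemma~\ref{lemma:pluriclosed}, replacing the explicit Hopf-surface form $\omega_-$ on $\C^2\setminus\{(0,0)\}\cong\Sph^3\times\R$ by the fundamental form of a general Samelson space, and to deduce the pluriclosed condition on the quotient from the SKT (indeed Bismut flat) character of the Samelson factor together with the K\"ahler condition on $K$. First I would endow the product $K\times G'=K\times G\times\R^n$ with the product complex structure $\tilde J=J\times J_L$ and the product metric $\tilde g=g+g'$. By hypothesis each $\psi(m)$ is a holomorphic isometry of $(K,J,g)$ and each map $(q,t)\mapsto(\rho(m)q,t+m)$ is a holomorphic isometry of $(G',J_L,g')$, so the diagonal $\Z^n$-action $m\cdot(p,q,t)=(\psi(m)p,\rho(m)q,t+m)$ consists of holomorphic isometries of $(K\times G',\tilde J,\tilde g)$. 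Since this action was already shown above to be free and properly discontinuous, the quotient $M_{\psi,\rho}$ is a complex manifold and $\tilde J,\tilde g$ descend to a complex structure $I$ and a Hermitian metric $h$.

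Next I would take as candidate fundamental form $\tilde\omega=\omega+\omega'$, where $\omega=g(J\cdot,\cdot)$ is the K\"ahler form of $K$ and $\omega'=g'(J_L\cdot,\cdot)$ is the fundamental form of the Samelson space. It is of type $(1,1)$ with respect to $\tilde J$ and non-degenerate, being the sum of non-degenerate forms on complementary factors, and it is invariant under the $\Z^n$-action precisely because each factor of the action preserves the corresponding summand ($\psi(m)^*\omega=\omega$ and the isometries of $G'$ preserve $\omega'$). Hence $\tilde\omega$ descends to a global non-degenerate $(1,1)$-form $\Omega$ on $M_{\psi,\rho}$, which realizes the Hermitian structure $(I,h)$ via $\Omega=h(I\cdot,\cdot)$.

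The remaining point is the SKT condition. Since $d^c$ splits along the product one has $d^c_{\tilde J}\tilde\omega=d^c_J\omega+d^c_{J_L}\omega'$, and as $K$ is K\"ahler we get $d^c_J\omega=0$, so that $d^c_{\tilde J}\tilde\omega=d^c_{J_L}\omega'$ and therefore $dd^c_{\tilde J}\tilde\omega=dd^c_{J_L}\omega'$. The key input is that the Samelson space is SKT, i.e. $dd^c_{J_L}\omega'=0$; this is the invariant replacement for the explicit computation \eqref{eqn:H} performed in Lemma~\ref{lemma:pluriclosed}. It holds because a Samelson space is Bismut flat, its Bismut connection being the left-invariant flat connection whose Bismut torsion $3$-form $-d^c_{J_L}\omega'$ coincides with the (bi-invariant) Cartan $3$-form $-b([\cdot,\cdot],\cdot)$ of $G$, which is closed.

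I expect this identification of the Samelson torsion with a closed form to be the only substantive step; everything else is a formal descent. Indeed, since the SKT condition is local and the projection $K\times G'\to M_{\psi,\rho}$ is a local biholomorphic isometry, $dd^c_I\Omega=0$ follows at once from $dd^c_{\tilde J}\tilde\omega=0$, and $(I,h)$ is the desired SKT structure on $M_{\psi,\rho}$.
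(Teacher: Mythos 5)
Your proposal is correct and follows essentially the same route as the paper: product structure $\tilde J=J\times J_L$, $\tilde g=g+g'$, reduction of $d^c_{\tilde J}\tilde\omega$ to $d^c_{J_L}\omega_L$ via the K\"ahler condition on $K$, identification of this torsion with the bi-invariant Cartan $3$-form $g'([\cdot,\cdot],\cdot)$ (which the paper computes directly from integrability of $J_L$ and bi-invariance of $g'$, rather than quoting Bismut flatness, but the content is identical), and descent through the holomorphic isometric $\Z^n$-action. No gaps.
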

\begin{proof}
We prove the result by constructing a SKT structure $(\tilde{g}, \tilde{J})$ on $K\times G \times \R^n$ preserved by the action of $\Z^n$. 		\\
Consider the Hermitian structure $(\tilde{J}=J\times J_L, \tilde{g}=g+b+g_E)$  on $K\times G \times \R^n$ with corresponding fundamental form
$\tilde{\omega} =\omega+\omega_L.$
Using that $d\omega=0$, then $d^c_{\tilde{J}}\tilde{\omega} =d^c_{J_L}\omega_L$.\\
We compute $d^c_{J_L}\omega_L$. Let us fix $X,Y,Z$ left invariant vector fields on $G'$. Then, using the integrability of $J_L$ and the bi-invariance of the metric $g'$, we obtain
$
d^c_L\omega_L(X,Y,Z)=g'([X,Y],Z)
$
(for a more detailed computation of $d^c_L\omega_L$ see for instance \cite[Example 2.25]{MG}).\\
Since $d^c_L\omega_L$ is $Ad(G')$ invariant, $d^c_L\omega_L$ is a bi-invariant form of $G'$ and, hence, it is closed. Observe that $d^c_L\omega_L$ can be actually identified with a form on $G$, as the factor $\R^n$ is abelian. Indeed, let $X_i+Y_i$ be decomposable vector fields on $G'$, i.e., $X_i \in \mathfrak{X}(G)$ and $Y_i \in \mathfrak{X}(\R^n)$. Then,
\[
\begin{split}
d^c_L\omega_L(X_1+Y_1,X_2+Y_2,X_3+Y_3)=&g'([X_1+Y_1,X_2+Y_2],X_3+Y_3)\\
=&g'([X_1,X_2],X_3+Y_3) =b([X_1,X_2],X_3).
\end{split}
\]
To conclude the proof, we show that the Hermitian structure $(\tilde{g}=g+b+g_E, \tilde{J}=J\times J_L)$ is preserved by the action of  $\Z^n$. 	\\
Consider the diffeomorphism induced by $m \in \Z^n$, i.e. the map $m \cdot (p,q,t)= (\psi(m) p, \rho(m) q, t+ m)$. We want to prove that the action is actually holomorphic respect to $\tilde{J}$. By hypothesis, $\psi(m)$ is holomorphic with respect to $J$ and $\rho$ is such that $(\rho(m) q, t+m)$ is holomorphic with respect to $J_L$. Then the complex structure $\tilde{J}$ descends down on $M_{\psi,\rho}$ to a complex structure $I$.
 Moreover, since both the group homomorphisms $\psi$ and $\rho$ have image in the isometry group of $K$ and $G$ respectively, and $\Z^n$ acts on $\R^n$ by translations, the product metric $\tilde{g}=g+b+g_E$ is manifestly preserved by $\Z^n$, and hence descends to a metric $h$ which by construction is compatible with $I$.  Then, $(I, h)$ is a  SKT  structure on $M_{\psi,\rho}$.
\end{proof}
\begin{proposition} \label{prop:flatsam}
Let $M_{\psi,\rho}=(K \times G \times \R^n)_{\psi,\rho}$ be constructed as in Lemma \ref{lemma:pluriclosed2} and we assume that $(K,g)$ is a (non-flat) Ricci flat K\"ahler manifold. Then the SKT structure $(I, h)$ is  CYT  and the Bismut connection is non-flat.
\end{proposition}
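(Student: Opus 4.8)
The plan is to mirror the computation carried out in the proof of Proposition \ref{prop:flathopf}, replacing the specific factor $\C^2\setminus\{0\}$ (with metric $\frac{g_E}{R^2}$) by the general Samelson space $G'=G\times\R^n$. As in Lemma \ref{lemma:pluriclosed2}, it suffices to establish the two properties for the $\Z^n$-invariant Hermitian structure $(\tilde J=J\times J_L,\ \tilde g=g+b+g_E)$ on the total space $K\times G\times\R^n=K\times G'$ and then descend to the quotient $M_{\psi,\rho}$; this descent is legitimate because $\Z^n$ acts by holomorphic isometries and hence preserves the torsion form $\tilde H=-d^c_{\tilde J}\tilde\omega=\tilde J\,d\tilde\omega$, exactly as argued for $\phi_n^*$ in Proposition \ref{prop:flathopf}. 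The central claim is that the Bismut curvature of $(\tilde g,\tilde J)$ reduces to the Levi-Civita (= Bismut) curvature of the K\"ahler factor $(K,g,J)$.

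First I would record that, by Lemma \ref{lemma:pluriclosed2}, the torsion satisfies $\tilde H=d^c_{\tilde J}\tilde\omega=d^c_L\omega_L$ and is supported entirely on the $G$ factor, being the bi-invariant Cartan $3$-form $b([X_1,X_2],X_3)$; in particular $\tilde H$ is closed and annihilates the $K$ and $\R^n$ directions. Since $\tilde g$ and $\tilde J$ are product structures, the Levi-Civita connection $\tilde\nabla^{LC}$ splits as the direct sum of the Levi-Civita connections of the three factors and preserves each of them. Feeding this into the general closed-torsion formula for $\tilde R^B$ (the same one used in Proposition \ref{prop:flathopf}, cf. \cite[Proposition 3.18]{GFS}) and decomposing each vector field as $X=X_0+X_1+X_2$ with $X_0\in\mathfrak{X}(K)$, $X_1\in\mathfrak{X}(G)$, $X_2\in\mathfrak{X}(\R^n)$, every term other than the Riemannian curvature either involves $\tilde H$ (hence only the $G$ components) or a covariant derivative of $\tilde H$ along the product connection. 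All mixed $K$--$G'$ contributions therefore drop out, and writing $X'=X_1+X_2\in\mathfrak{X}(G')$ one obtains the block decomposition
\[
\tilde R^B(X,Y,Z,W)=R^{LC}_1(X_0,Y_0,Z_0,W_0)+R^B_2(X',Y',Z',W'),
\]
where $R^{LC}_1$ is the Levi-Civita curvature of $(K,g)$ and $R^B_2$ is the Bismut curvature of the Samelson space $(G',g',J_L)$.

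The two remaining points are then immediate. By the definition of a Samelson space, equivalently by \cite[Lemma 3]{WYZ} together with the bi-invariance of $g'$ (which makes the Cartan torsion parallel and the connection $\nabla^B_2$ flat), the factor $(G',g',J_L)$ is Bismut flat, so $R^B_2\equiv 0$ and hence $\tilde R^B=R^{LC}_1$. Tracing as in Proposition \ref{prop:flathopf} gives $\tilde\rho^B=\rho^{LC}_1$, the Ricci form of $(K,g)$, which vanishes because $g$ is Ricci flat; thus $(I,h)$ is CYT. Finally, since $g$ is assumed non-flat we have $R^{LC}_1\not\equiv 0$, so $\tilde R^B\not\equiv 0$ and the Bismut connection of $(M_{\psi,\rho},I,h)$ is non-flat.

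I expect the only delicate bookkeeping to be the verification that the covariant-derivative terms $\tilde\nabla^{LC}_X\tilde H$ contribute nothing off the $G'$ block: this rests on $\tilde H$ being bi-invariant (hence constant) along $G$ and annihilating the $K$ and $\R^n$ directions, combined with the product form of $\tilde\nabla^{LC}$, so that differentiating $\tilde H$ in $K$ or $\R^n$ directions kills it while differentiating in $G$ directions stays within $G'$ --- precisely the phenomenon made explicit in the computation of Proposition \ref{prop:flathopf}. Conceptually this says the Bismut connection of the product is the product of the Levi-Civita connection of $K$ and the Bismut connection of $G'$, which is why the curvature is block-diagonal. Once this is granted, the Bismut flatness of the Samelson factor and the Ricci-flatness of $K$ finish the argument with no further work.
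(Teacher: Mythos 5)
Your proposal is correct and follows essentially the same route as the paper: reduce to the $\Z^n$-invariant product structure $(\tilde J,\tilde g)$ on $K\times G'$, observe that the torsion is the bi-invariant Cartan form supported on $G$, apply the closed-torsion curvature formula to get the block decomposition $\tilde R^B=R^{LC}_1+R^B_2$ with $R^B_2\equiv 0$ by Bismut flatness of the Samelson factor, and trace to conclude $\tilde\rho^B=\rho^{LC}_1=0$. The only cosmetic differences are your three-way splitting of vector fields (versus the paper's two-way $K$ versus $G'$ split) and the reference used for Bismut flatness of the bi-invariant factor.
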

\begin{proof}
To prove the Theorem, it suffices to prove that $(\tilde{g},\tilde{J})$ is a non-flat CYT and SKT structure on $K \times G \times \R^n$. Indeed, with the same argument used in the end of the proof of Lemma \ref{lemma:pluriclosed2}, $(\tilde{g},\tilde{J})$ descends on $M_{\psi,\rho}$ to the SKT structure $(I, h)$. \\
Since $(K,g,J)$ is K\"ahler, the Bismut connection of $(g,J)$  is the Levi-Civita  $\nabla^{LC}_1$.
Analogously, we denote by  $\nabla^B_{2}$ the Bismut connection of the Hermitian structure $(g',J_L)$ on $ G'$. $\nabla^B_{2}$ is defined by the relation
\[
    g'(( \nabla^B_{2})_{X}  Y,Z)= g'((\nabla^{LC}_{2})_{X}Y,Z)- \frac{1}{2} d^c_{J_{L}}\omega_L(X,Y,Z),
\]
for any $X,Y,Z \in \mathfrak{X}(G')$. In the following, we will denote by $H_2$ the torsion $3$-form $- d^c_{J_{L}}\omega_L$, and we have already shown that $H_2$ is a bi-invariant (and hence closed) form defined as $-g'([\cdot,\cdot],\cdot)$ on left-invariant vector fields.\\
Consider the Hermitian manifold $(K \times G', \tilde J,  \tilde {g})$.  We have already proved in Lemma \ref{lemma:pluriclosed2}, that the fundamental form $\tilde {\omega}$ satisfies 
\begin{equation} \label{eqn:torsion}
d^c_{\tilde {J}} \tilde \omega =d^c_{\tilde {J}}( \omega+\omega_L)=d^c_{J_L}\omega_L=-H_2.
\end{equation}
Consider  the  unique metric connection  $\tilde {\nabla}$ on $(K \times G',  \tilde J, \tilde {g})$ with skew-symmetric torsion $H_2$ determined by
\[
    \tilde g (\tilde {\nabla}_{X}Y,Z)=  \tilde g  (\tilde {\nabla}^{LC}_{X}Y,Z)- \frac{1}{2} d^c_{J_{L}}\omega_L(X,Y,Z),  \ \forall X,Y,Z \in \mathfrak{X}(K \times G'), 
\]
where $\tilde \nabla^{LC}$ is the Levi-Civita connection of the product metric $ \tilde g $.  It turns out that $\tilde \nabla$ is the Bismut connection of  $(K \times G', \tilde J, \tilde g)$, by \eqref{eqn:torsion}. From now on we set $\tilde \nabla=\tilde \nabla^B$ and we will denote by $\tilde{H}$ the torsion $3$-form $-d^c_{\tilde {J}} \tilde \omega$ of $\tilde\nabla^B$. \\
Let now compute  the Bismut curvature tensor $\tilde {R}^B$. Using the same computation of the proof of Proposition \ref{prop:flathopf}, we get 
\begin{align*}
 \tilde R^B (X_1+X_2,Y_1+Y_2, Z_1+Z_2,W_1+W_2) = &  R^{LC}_{1}(X_1,Y_1,Z_1,W_1)+R^B_{2}(X_2,Y_2,Z_2,W_2)\\[3pt]
  =& R^{LC}_{1}(X_1,Y_1,Z_1,W_1),
 \end{align*}
where the last equality follows  from the fact that the Bismut curvature tensor $R^B_{2}$ of $(g', J_L)$ vanishes, as $g'$ is bi-invariant and $J_L$ is left-invariant (for a reference see for instance \cite[Proposition 8.39]{GFS}).\\
We are now ready to compute the Ricci form $\tilde \rho^B$ of the Bismut connection $\tilde \nabla^B$. As already done before, without loss of generality, we may compute $\tilde \rho^B$ on decomposable vector fields. By definition 
\[
\begin{split}
\tilde \rho^B (X_1+X_2,Y_1+Y_2)=& \frac{1}{2} \sum_{i=1}^{2k+2r} \tilde R^B (X_1+X_2,Y_1+Y_2,J e_i ,e_i)\\
=&  \frac{1}{2} \sum_{i=1}^{2k} R^B_1 (X_1,Y_1,J e_i ,e_i) 
=  \rho^{LC}_{1} (X_1,Y_1)
= 0,
\end{split}            
\]
where $ \rho^{LC}_{1}$ is the Ricci curvature form of $(K,g,J)$, $\{e_1,\dots,e_{2k}\}$ is a local orthonormal basis of $TK$ and $\{e_{2k+1},\dots,e_{2k+2r} \}$ is a local orthonormal basis of $TG'$. 	\\
The pair $( \tilde J, \tilde g)$ descends on $M_{\psi,\rho}$ to $(I, h)$, where $(I, h)$ is the SKT  structure constructed in Lemma \ref{lemma:pluriclosed2}, by previous remarks. We then have that  $(I, h)$ is CYT structure with a non-flat Bismut connection.
\end{proof}
\section{Generalized K\"ahler examples}
\begin{definition} [\cite{MG2}]
A bi-Hermitian manifold $(M, I_{\pm}, g)$ is said to be \emph{generalized K\"ahler} if $g$ is $I_\pm$-compatible, $d^c_+ \omega_+=-d^c_-\omega_-$ and $dd^c_+ \omega_+=0$, where $d^c_\pm=-I_\pm d\omega_\pm$ and $\omega_\pm=g I_{\pm}$.
\end{definition}
 As a trivial example, if $(g, J)$ is a K\"ahler structure on $M$, $J_+ = J$ and $J_- = \pm J$ is a solution of the above equations. Hence, the case of major interested is when the generalized K\"ahler structure does not come from a K\"ahler one. We refer to such a generalized K\"ahler structure as \emph{non-trivial}. We recall  the following
\begin{definition}
A generalized K\"ahler structure $(I_{\pm}, g)$ is said to be \emph{twisted} if $[d^c_+ \omega_+] \neq 0 \in H^3(M)$ and \emph{untwisted} otherwise.
\end{definition}
Given a generalized K\"ahler manifold $(M,g, J_\pm)$, the closed $3$-form $d^c_+\omega_+=-d^c_-\omega_-$ is also called the \emph{torsion} of the generalized K\"ahler structure. \\ 
Consider the SKT  structure $(h,I_-)$, with $I_- = I$,  constructed in Lemma \ref{lemma:pluriclosed}. In the next Theorem we prove that on $(M_f,h)$ is always possible to find another complex structure $I_+$ compatible with $h$ and satisfying $d^c_+ \omega_+=-d^c_-\omega_-$. Therefore, the triple $(h,I_\pm)$ defines a generalized K\"ahler structure on $M_f$. 

\begin{theorem} \label{theorem:kahlermappingtorus}
Let $(K,J,g)$ be a compact K\"ahler manifold of complex dimension $k$ and let $\psi$ be a K\"ahler isometry, i.e., $\psi:K \to K$ is an holomorphic diffeomorphism satisfying $\psi^*(g)=g$. Then, the mapping torus 
$
M_f=(K \times \mathbb{S}^3)_f,
$
with $f=(\psi,Id_{\Sph}^3)$, admits a split twisted generalized K\"ahler structure $(I_{\pm}, h, \Omega_{\pm})$.
Moreover, $I_+$  and $I_-$  induce opposite orientations on $M_f$.
\end{theorem}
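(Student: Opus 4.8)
The plan is to build the second complex structure $I_+$ by keeping the Kähler structure on $K$ and replacing the standard complex structure $J_-$ on the $\C^2\setminus\{(0,0)\}$ factor by the oppositely-handed quaternionic structure. Identifying $\C^2\cong\mathbb{H}$ via $q=z_1+z_2 j$, the standard $J_-$ is left multiplication $L_i$ by the quaternion $i$, and I would set $J_+$ to be right multiplication $R_i$ by $i$. Concretely $R_i$ is the constant (hence integrable) complex structure on $\C^2$ whose holomorphic coordinates are $(z_1,\bar z_2)$, so its fundamental form for the Hopf metric $g_E/R^2$ is
\[
\omega_+=\frac{i}{R^2}\left(dz_1\wedge d\bar z_1-dz_2\wedge d\bar z_2\right),
\]
to be compared with $\omega_-$ in \eqref{eqn:omegaminus}. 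Taking $I_+$ to be the structure induced on $M_f$ by $J\times R_i$ and $\Omega_\pm=h(I_\pm\cdot,\cdot)$, the whole statement reduces to verifications carried out on $\C^2\setminus\{(0,0)\}$.

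First I would check that $(h,I_+)$ is a well-defined Hermitian structure on $M_f$. Integrability of $J\times R_i$ is immediate since $R_i$ is constant on $\mathbb{R}^4$ and $J$ is integrable; compatibility with $\tilde g=g+g_E/R^2$ holds because $R_i$ is orthogonal for $g_E$, hence for any conformal rescaling. For the descent to the quotient I would verify that each automorphism $\phi_n$ of \eqref{eqn:Zaction}, acting as $\psi^n$ on $K$ and as the real dilation $\underline x\mapsto 2^n\underline x$ on $\C^2\setminus\{(0,0)\}$, is holomorphic for $J\times R_i$: the dilation commutes with $R_i$ and preserves $g_E/R^2$, while $\psi^n$ is a $J$-holomorphic isometry by hypothesis. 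Thus $(h,I_+)$ descends to $M_f$ exactly as $(h,I_-)$ did in Lemma \ref{lemma:pluriclosed}.

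The core of the argument is the identity $d^c_+\omega_+=-d^c_-\omega_-$, and this is the step I expect to be the main obstacle. Since both $J\times R_i$ and the metric are products and $(K,J,g)$ is Kähler, $d^c$ splits and the $K$-contribution vanishes, so it suffices to prove $d^c_{J_+}\omega_+=-d^c_{J_-}\omega_-$ on $\C^2\setminus\{(0,0)\}$. I would carry this out by the same direct coordinate computation that produced \eqref{eqn:H}, now with the single coordinate $z_2$ conjugated; the sign flip relative to \eqref{eqn:H} is precisely the effect of passing from $L_i$ to the oppositely-handed $R_i$ (structurally, under $\C^2\setminus\{(0,0)\}\cong \Sph^3\times\R$ the two forms $d^c_\pm\omega_\pm$ are $\mp$ the Cartan $3$-form, which is why they are opposite). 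Once this identity is established, the remaining generalized Kähler conditions are automatic: $dd^c_+\omega_+=-dd^c_-\omega_-=0$ by the closedness already computed in Lemma \ref{lemma:pluriclosed}, and $(h,I_\pm)$ is generalized Kähler by definition.

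It then remains to justify the three adjectives. For \emph{twisted} I would show $[d^c_+\omega_+]=-[d^c_-\omega_-]\neq 0$ in $H^3(M_f)$: the torsion restricts on each $\Sph^3$-factor to a nonzero multiple of the bi-invariant volume form, and since $M_f\cong M_\psi\times\Sph^3$ by Proposition \ref{proposition:prop}, the Künneth component $H^0(M_\psi)\otimes H^3(\Sph^3)$ detects it. For \emph{split} I would check $[I_+,I_-]=0$: on $K$ both structures equal $J$, while on $\C^2$ the structures $L_i$ and $R_i$ commute because left and right quaternionic multiplications commute, so $I_+I_-=I_-I_+$ and the structure is of split type. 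Finally, for the orientations I would observe that $L_i$ and $R_i$ induce opposite orientations on $\mathbb{H}\cong\mathbb{R}^4$ (equivalently, $R_i$ conjugates exactly one complex coordinate, namely $z_2$), while $I_\pm$ agree on $K$; hence $I_+$ and $I_-$ induce opposite orientations on $M_f$.
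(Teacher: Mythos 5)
Your proposal is correct and follows essentially the same route as the paper: your $R_i$ is exactly the paper's $J_+$ with holomorphic coordinates $\zeta_1=z_1$, $\zeta_2=\overline{z}_2$, and the verification of $d^c_+\omega_+=-d^c_-\omega_-$, the split and orientation claims, and the descent through the $\Z$-action all match. The only cosmetic difference is in the twistedness argument, where the paper integrates the pullback of $\mathrm{vol}_{\Sph^3}$ over an embedded $\Sph^3$ via Stokes rather than invoking K\"unneth, but these amount to the same computation.
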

\begin{proof}
Let $(I_- := I, h)$ the  SKT structure constructed in Lemma  \ref{lemma:pluriclosed}.
 We consider on $K \times \C^2 \setminus \{(0,0)\} \cong K \times \Sph^3 \times \R$ another product complex structure $J \times J_+$, where $J_+$ is the complex structure on $\C^2 \setminus \{(0,0)\}$ obtained by changing the orientation of the $z_2$ plane. More precisely, if $(z_1,z_2)$ and  $(\zeta^1,\zeta^2)$ are the holomorphic coordinates associated to $J_{\pm}$ respectively, then $\zeta_1=z_1$ and $\zeta_2=\overline{z_2}$.\\
The automorphisms $\phi_n$ associated to the $\Z$-action described in Lemma \ref{lemma:pluriclosed} are holomorphic also with respect to $J \times J_+$, implying that $J \times J_+$ descends to a complex structure $I_+$ on the quotient $ M_f\cong\bigslant{K\times \C^2\setminus\{(0,0)\} }{ \Z} $, which satisfy $[I_+,I_-]=0$.\\
We denote by $\Omega_+=\omega+\omega_+$ the fundamental form of the Hermitian structure $(I_+, h)$, where $h$ is the Riemannian metric explicitly given in \eqref{eqn:gkmetric}  and $\omega_+$ can be written in the coordinates $(\zeta_1,\zeta_2)$ and $(z_1,z_2)$, respectively, as
\[
\omega_+=\frac{i}{R^2} (d\zeta_1 \wedge d \overline{\zeta}_1 +  d\zeta_2 \wedge d \overline{\zeta}_2)=\frac{i}{R^2} (dz_1 \wedge d \overline{z}_1 -  dz_2 \wedge d \overline{z}_2).
\]
We compute $d^c_+ \Omega_+$. 
\begin{align*}
d^c_+ \Omega_+= &- \frac{1}{R^4}\left[(\overline\zeta_2d\zeta_2-\zeta_2 d\overline{\zeta}_2)\wedge d\zeta_1 \wedge d\overline{ \zeta}_1+ (\overline{\zeta}_1d\zeta_1-\zeta_1 d\overline{\zeta}_1)\wedge d\zeta_2 \wedge d\overline{ \zeta}_2\right]  \\ 
=& \quad \frac{1}{R^4}\left[(\overline{z}_2dz_2-z_2 d\overline{z}_2)\wedge dz_1 \wedge d\overline{ z}_1+ (\overline{z}_1dz_1-z_1 d\overline{z}_1)\wedge dz_2 \wedge d\overline{ z}_2\right]\\
=& -d^c_- \Omega_-.
\end{align*}
It follows that $(h,I_\pm)$ is a split generalized K\"ahler structure on $M_f \cong \bigslant{K\times \C^2\setminus\{(0,0)\} }{ \Z}$. \\
We claim that $(h,I_\pm)$  is a twisted  generalized K\"ahler structure. \\
Let $H=d^c_+ \Omega_+$. If one considers the radial projection 
\[
\pi: \bigslant{K\times \C^2\setminus\{(0,0)\} }{ \Z} \to \Sph^3, \quad 
    [(p,\underline{x})] \mapsto \frac{\underline{x}}{\norm{\underline{x}}},
\]
then it is straightforward to observe that $H=2 \pi^* vol_{\Sph^3}$. \\
By contradiction, let us assume that $H$ is exact. Fixed any $p \in K$ and $t \in (0,1)$, we define $\iota_{p,t}:\Sph^3 \to  \bigslant{K\times \C^2\setminus\{(0,0)\} }{ \Z}, \ \ q \mapsto [(p, 2^t \cdot q)]$. Then, by Stokes Theorem,
\[
0= \int_{\Sph^3} \iota_{p,t}^*H=2 \int_{\Sph^3} \iota_{p,t}^* ( \pi^* vol_{\Sph^3} )=2  \int_{\Sph^3} (\pi \circ \iota_{p,t})^* vol_{\Sph^3}=2 \int_{\Sph^3} vol_{\Sph^3}\neq 0.
\]
Clearly, this leads to a contradiction. \\
Let us consider the volume forms associated to the pairs $(h, I_\pm)$, which are respectively
\[
\begin{split}
\frac{1}{k+2!}\Omega_-^{k+2}=&-\frac{1}{ k! R^4} \omega^k \wedge dz_1 \wedge d\overline{z}_1 \wedge dz_2 \wedge d\overline{z}_2,		\\
\frac{1}{k+2!}\Omega_+^{k+2}=&-\frac{1}{ k! R^4} \omega^k \wedge d\zeta_1 \wedge d\overline{\zeta}_1 \wedge d\zeta_2 \wedge d\overline{\zeta}_2.\\
\end{split}
 \]
Since $d\zeta_1= dz_1$ and $d \zeta _2= d\overline{z}_2$, $\frac{1}{k+2!}\Omega_-^{k+2}=-\frac{1}{k+2!}\Omega_+^{k+2}$. The last statement follows. 
\end{proof}
\begin{remark}
By Proposition \ref{proposition:prop}, the generalized K\"ahler structure constructed in Theorem \ref{theorem:kahlermappingtorus} is not trivial. 
\end{remark}
We exhibit an explicit Example fitting in the hypothesis of Theorem \ref{theorem:kahlermappingtorus}.
\begin{example}
Let $K$ be the flat torus $\mathbb{T}^4$ endowed with the standard K\"ahler structure $(g,J,\omega)$, defined as follows
\[
J\bigg(\frac{\partial}{\partial x_1}\bigg)=\frac{\partial}{\partial x_2}, \quad J\bigg(\frac{\partial}{\partial x_3}\bigg)=-\frac{\partial}{\partial x_4}, \quad 
g=dx_1^2+dx_2^2+dx_3^2+dx_4^2,  \quad
\omega=dx_1\wedge dx_2-dx_3\wedge dx_4,
\]
where $(x_1,x_2,x_3,x_4)$ are the standard coordinates on $\mathbb{T}^4 \cong \mathbb{Z}^4 \backslash \mathbb{R}^4$. \\
Let $\psi$ be the $\mathbb{R}^4$-rotation $(x_1,x_2,x_3,x_4)\mapsto (x_2,-x_1,x_4,-x_3)$. Since $\psi$ is represented by an integer matrix, $\psi$ descend to a diffeomorphism of the flat torus $\mathbb{T}^4$. \\
It is immediate to observe that $\psi$ is holomorphic with respect to $J$ and preserves the K\"ahler structure $(g,J)$. Indeed, $[\psi_*, J] =0$ and
$\psi^* g= \psi^*\big(\sum_{i=1}^4 (dx_i)^2 \big)=g.$

Then the mapping torus
\[
   M_f= \frac{\mathbb{T}^4 \times \Sph^3 \times [0,1]}{(p,q,0) \sim (\psi(p),q,1) },
\]
is a generalized K\"ahler manifold, by Theorem \ref{theorem:kahlermappingtorus}. \\
We compute the cohomology of the example. Since $M_f\cong M_\psi \times \Sph^3$ one may compute the cohomology of $M_\psi$, and then apply the K\"unneth formula. \\
As already mentioned in the proof of the Proposition \ref{proposition:prop}, the cohomology of $M_\psi$ is completely determined by the data $N^r_{\mathbb{T}^4}=\ker(\psi_r^*-Id)$ and $C^r_{\mathbb{T}^4}=\operatorname{coker}(\psi_r^*-Id)$, which are easily computable:
\[
\begin{split}
     &K^0_{\mathbb{T}^4}= C^0_{\mathbb{T}^4}= \langle 1 \rangle, \ \ \ N^1_{\mathbb{T}^4}= C^1_{\mathbb{T}^4}= 0,  \\
     &N^2_{\mathbb{T}^4}= C^2_{\mathbb{T}^4}= \langle \ [dx_{12}], [dx_{13}+dx_{24}], [dx_{14}-dx_{23}], [dx_{34}] \ \rangle,\\
     &N^3_{\mathbb{T}^4}= C^3_{\mathbb{T}^4}= 0, \ \ \  N^4_{\mathbb{T}^4}= C^4_{\mathbb{T}^4}= \langle \  [dx_{1234}] \ \rangle. \\
\end{split}
\]
The computation of $H^\bullet(M_\psi)$ is now trivial,   
$$
\begin{array}{l}
    H^1 (M_\psi) =  \langle [dt]   \rangle, \quad
    H^2 (M_\psi) =  \langle  [dx_{12}], [dx_{13}+dx_{24}], [dx_{14}-dx_{23}], [dx_{34}]  \rangle,\\[3pt]
    H^3 (M_\psi) =  \langle [dt\wedge dx_{12}], [dt \wedge (dx_{13}+dx_{24})], [dt\wedge(dx_{14}-dx_{23})], [dt\wedge dx_{34}] \rangle,\\[3pt]
    H^4 (M_\psi) =  \langle [dx_{1234}]    \rangle,  \quad
    H^5(M_\psi)  =  \langle [dt \wedge d_{1234}]    \rangle.
\end{array}
$$
\end{example}
Consider the $2r$-dimensional Lie group $G'=G \times \R^{n}$ 
endowed with the bi-invariant metric $g'=b+g_{E}$, where $G$ is a compact, conneted and simply connected semisimple Lie group endowed with the bi-invariant metric $b$ and $g_E$ is the Euclidean metric on $\R^n$. We assume that $\dim(G) \ge 3$. We denote by $\mathfrak{g}^L(G')$ and $\mathfrak{g}^R(G')$ the left and right Lie algebras of $G$, respectively. Then
\[
\mathfrak{g}^L(G') = \mathfrak{g}^L \oplus \R^n = \mathfrak{g}^L(G''), \ \mathfrak{g}^R(G') = \mathfrak{g}^R \oplus \R^n = \mathfrak{g}^R(G''),
\]
where $G''=G \times \mathbb{T}^n$. Therefore $G''$ inherits form $G'$ the bi-invariant metric $g'$. Since $G''$ is compact, it admits both a left and a right invariant complex structure, namely $J_L$ and $J_R$,  which are compatible with the bi-invariant metric $g'$. As $G'$ and $G''$ shares the same Lie algebras, then $(g',J_L,J_R)$ is also a bi-Hermitian structure on $G'$, where $J_L$ and $J_R$ are left and right invariant complex structures, respectively. In particular, $(G', J_L, g')$ is a Samelson space. \\

\begin{theorem} \label{theorem:samspaces}
Let $(G'=G \times \R^{n}, g', J_L, J_R)$  be as above. Let $\rho$ be a group homomorphism $\rho: \Z^n \to {\mbox {Isom}} (G)$ such that for each $m \in \Z^n$ the 
diffeomorphism \[m \cdot (q,t)= (\rho(m)q, t+m)\] is a holomorphic isometry of $g'$ with respect to $J_L$ and $J_R$. Let $(K,g,J)$ be a compact K\"ahler manifold and let $\psi$ be a group homomorphism $\psi: \Z^n \to I{\mbox {som}}_{hol}(K)$, where ${\mbox {Isom}}_{hol}(K)$ is the group of holomorphic K\"ahler isometries of $(K,g,J)$.  
Then the quotient
\[
M_{\psi,\rho}= (K \times G \times \R^n)_{\psi,\rho} = \bigslant{K \times G \times \R^n} {\Z^n},  
\]
where $\Z^n$ acts freely and properly discontinuously  on $K \times G \times \R^n$ as $$m \cdot (p,q,t)=(\psi(m)p,\rho(m)q,t+m),$$ admits a generalized K\"ahler structure.
\end{theorem}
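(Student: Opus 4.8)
The plan is to mirror the construction of Theorem \ref{theorem:kahlermappingtorus}, but to replace the ad hoc orientation-reversing second complex structure on the fibre by the right-invariant complex structure $J_R$ on the Samelson space. Concretely, I set $I_-:=I$ to be the complex structure of Lemma \ref{lemma:pluriclosed2}, induced by $\tilde J_-:=J\times J_L$ on $K\times G'$, and I introduce $I_+$ as the structure induced by $\tilde J_+:=J\times J_R$. Both $\tilde J_\pm$ are compatible with the product metric $\tilde g=g+b+g_E$, since $J$ is compatible with $g$ and, by hypothesis, $J_L$ and $J_R$ are both compatible with $g'$. Writing $\tilde\omega_-=\omega+\omega_L$ and $\tilde\omega_+=\omega+\omega_R$ for the two fundamental forms, the goal is to check the three conditions in the definition of generalized K\"ahler structure upstairs on $K\times G'$ and then push everything down to $M_{\psi,\rho}$.

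First I would verify that $\tilde J_+$ descends to the quotient. The $\Z^n$-action $m\cdot(p,q,t)=(\psi(m)p,\rho(m)q,t+m)$ is holomorphic with respect to $J$ on $K$ (because $\psi$ takes values in $\text{Isom}_{hol}(K)$) and, by hypothesis, with respect to $J_R$ on $G'$; hence it is holomorphic for $\tilde J_+=J\times J_R$, so $\tilde J_+$ passes to a complex structure $I_+$ on $M_{\psi,\rho}$. Since the action is also isometric, the metric descends to $h$ and remains $I_+$-compatible, exactly as in Lemma \ref{lemma:pluriclosed2} for $I_-$.

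The heart of the argument is the torsion identity $d^c_+\omega_+=-d^c_-\omega_-$, which I would first establish on $K\times G'$. Using $d\omega=0$, the K\"ahler factor contributes nothing, so $d^c_{\tilde J_-}\tilde\omega_-=d^c_{J_L}\omega_L$ and $d^c_{\tilde J_+}\tilde\omega_+=d^c_{J_R}\omega_R$, both forms supported on $G'$. For $J_L$, Lemma \ref{lemma:pluriclosed2} already gives $d^c_{J_L}\omega_L(X,Y,Z)=g'([X,Y],Z)$ on left-invariant fields, i.e.\ the bi-invariant (hence closed) Cartan $3$-form. Repeating the same computation with right-invariant fields yields $d^c_{J_R}\omega_R(X,Y,Z)=g'([X,Y]_R,Z)$, where $[\cdot,\cdot]_R$ denotes the bracket of right-invariant vector fields. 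Since the right-invariant fields realize the opposite Lie algebra, one has $[X,Y]_R=-[X,Y]$ at the identity, and both $d^c_{J_L}\omega_L$ and $d^c_{J_R}\omega_R$ are bi-invariant $3$-forms, hence determined by their value at the identity; comparing these values gives $d^c_{J_R}\omega_R=-d^c_{J_L}\omega_L$. Therefore $d^c_+\tilde\omega_+=-d^c_-\tilde\omega_-$ on $K\times G'$, and as both sides are $\Z^n$-invariant they descend to $d^c_+\omega_+=-d^c_-\omega_-$ on $M_{\psi,\rho}$. The remaining closedness condition $dd^c_+\omega_+=0$ is then immediate, since the common torsion $\pm g'([\cdot,\cdot],\cdot)$ is bi-invariant and hence closed. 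This produces the generalized K\"ahler structure $(I_\pm,h)$.

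The main obstacle I anticipate is the sign bookkeeping in the right-invariant computation: one must confirm that the analogue of the Lemma \ref{lemma:pluriclosed2} identity holds verbatim with the right bracket, and that the passage from left- to right-invariant frames produces exactly the sign reversal rather than some other bi-invariant $3$-form. This is precisely the mechanism behind Gualtieri's generalized K\"ahler structures on even-dimensional compact Lie groups, here tensored with the K\"ahler factor $K$ and descended along the $\Z^n$-action; once the sign is pinned down, the compatibility, descent, and closedness verifications are routine. I would also record in passing that, in contrast with Theorem \ref{theorem:kahlermappingtorus}, here $I_+$ and $I_-$ induce the \emph{same} orientation, since $J_L$ and $J_R$ arise from the same complex structure on $\g$ and therefore agree at the identity.
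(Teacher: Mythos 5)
Your proposal is correct and follows essentially the same route as the paper: take $\tilde J_\pm = J\times J_{L/R}$ on $K\times G'$, observe that $d^c_{\tilde J_\pm}\tilde\omega_\pm = d^c_{J_{L/R}}\omega_{L/R}$ since $d\omega=0$, use the anti-isomorphism between the right and left Lie algebras to get the sign flip $d^c_{J_R}\omega_R = -d^c_{J_L}\omega_L$, and descend everything along the $\Z^n$-action using the holomorphy hypotheses on $\psi$ and $\rho$. The orientation observation you add appears in the paper as a separate remark (justified there via the group inversion) rather than in the proof itself.
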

\begin{proof}
As already seen in the proof of Lemma \ref{lemma:pluriclosed2}, $(\tilde{g}=g+g', \tilde{J}_-=\tilde{J}=J\times J_L)$ is a SKT structure on $K \times G \times \R^n$ preserved by the $\Z^n$-action $m \cdot (p,q,t)= (\psi(m)p, \rho(m)q, t+m)$, which induces the SKT structure $(h, I_-=I)$ on $M_{\psi,\rho}$ (for the notation, see Lemma \ref{lemma:pluriclosed2}). Consider the following product complex structure $\tilde{J}_+=J\times J_R$ which is compatible with $\tilde g$ by construction and it is such that the fundamental form of $(\tilde g, \tilde J_+)$ is
$$
 \tilde{\omega}_+=\omega+\omega_R.
$$
Using that $d\omega=0$, then $d^c_{\tilde{J}_+}\tilde{\omega}_+=d^c_{J_R}\omega_R$.  Recalling that $d^c_{J_L}\omega_L=g'([\cdot,\cdot],\cdot)$ on left invariant vector fields, we obtain $d^c_{\tilde{J}_-}\tilde{\omega}_-=d^c_{J_L}\omega_L=-d^c_{J_R}\omega_R$, as the right Lie algebra is anti-isomorphic to the left one. 
To conclude the proof, it suffices to show that $\tilde J_+$ is preserved by the $\Z^n$-action. Moreover, by hypothesis, $\psi(m)$ is holomorphic with respect to $J$ and $\rho$ is such that $(\rho(m) q, t+m)$ is holomorphic with respect to both $J_L$ and $J_R$. Then the complex structure $\tilde{J}_+$ descends on $M_{\psi,\rho}$ to a complex structure $I_+$. Therefore, $M_{\psi,\rho}$ inherits from $K \times G \times \R^n$ the generalized K\"ahler structure. 
\end{proof}
\begin{remark}
If $\rho$ is the trivial homomorphism, i.e. $\rho (m)=Id$, for each $m\in \Z^n$, then the induced diffeomorphism $(q,t)\mapsto (q, t+m)$ is necessarily holomorphic with respect to $J_L$ and $J_R$. 
\end{remark}
\begin{remark}
The complex structures of generalized K\"ahler metric $(h,I_\pm)$ induce the same orientation. Indeed, $I_\pm$ are induced by $\tilde{J}_-=J \times J_L$ and $\tilde{J}_+=J \times J_R$ and $\tilde J_\pm$ have the same orientation, as $J_L$ and $J_R$ are isomorphic as complex manifolds via the inversion of the group. In the case $G=SU(2)$ and $n=1$, then Theorem \ref{theorem:samspaces} gives a different generalized K\"ahler structure on the mapping torus $M_f$ with respect to Theorem \ref{theorem:kahlermappingtorus}. 
\end{remark}

\begin{corollary}
Let $(M_{\psi,\rho},h,I_\pm)$ be the generalized K\"ahler manifold constructed as in Theorem \ref{theorem:samspaces}. Then the generalized K\"ahler structure $(h,I_\pm)$ is twisted, $I_+$ and $I_-$ both fail to satisfy the $dd^c_\pm$-Lemma and in particular they do not admit any compatible K\"ahler metric. 
\end{corollary}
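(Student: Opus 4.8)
The plan is to reduce the whole statement to the single geometric fact that the torsion of the constructed generalized K\"ahler structure is cohomologically nontrivial; the failure of the $dd^c_\pm$-Lemma and the non-existence of a compatible K\"ahler metric will then follow by formal arguments.

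First I would identify the torsion. By Lemma~\ref{lemma:pluriclosed2} and Theorem~\ref{theorem:samspaces}, the common torsion $H:=d^c_+\omega_+=-d^c_-\omega_-$ equals, on the cover $K\times G\times\R^n$, the pullback under the projection to $G$ of the bi-invariant Cartan $3$-form $-b([\,\cdot\,,\cdot\,],\cdot)$. Since $G$ is compact and semisimple with $\dim G\ge 3$, this Cartan form represents a nonzero class in $H^3(G,\R)$ (it is, up to scale, a generator in each simple factor).

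To establish that the structure is twisted, i.e. $[H]\neq 0\in H^3(M_{\psi,\rho})$, I would restrict $H$ to a fibre copy of $G$. Fixing $p_0\in K$ and $t_0\in\R^n$, the map $\iota\colon G\to M_{\psi,\rho}$, $q\mapsto[(p_0,q,t_0)]$, is an embedding: if $[(p_0,q,t_0)]=[(p_0,q',t_0)]$ then some $m\in\Z^n$ satisfies $t_0+m=t_0$, forcing $m=0$ and $q=q'$. By construction $\iota^*H$ is exactly the Cartan $3$-form on $G$, so choosing a $3$-cycle $\Sigma\subset G$ (for instance one coming from an $SU(2)$-subgroup) we have $\int_\Sigma\iota^*H\neq 0$. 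Were $H=d\beta$ on $M_{\psi,\rho}$, Stokes' Theorem would give $\int_\Sigma\iota^*H=\int_\Sigma d(\iota^*\beta)=0$, a contradiction; hence $[H]\neq 0$. I expect this to be the main obstacle, since it is the only step where the global geometry---compactness, the $\Z^n$-action and the semisimplicity of $G$---really enters.

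The last two assertions are then formal. The form $H$ is $d$-closed because the structure is SKT ($dH=dd^c_+\omega_+=0$), it is $d^c_+$-closed because $(d^c_+)^2=0$ by integrability of $I_+$, and it is $d^c_+$-exact since $H=d^c_+\omega_+$. If the $dd^c_+$-Lemma held, a form that is $d$-closed, $d^c_+$-closed and $d^c_+$-exact would be $dd^c_+$-exact, hence $d$-exact, forcing $[H]=0$; this contradicts twistedness, so the $dd^c_+$-Lemma fails. Applying the same reasoning to $-H=d^c_-\omega_-$ shows the $dd^c_-$-Lemma fails as well. Finally, $M_{\psi,\rho}$ is compact (it fibres over $\mathbb{T}^n$ with compact fibre $K\times G$), and a compact complex manifold carrying a compatible K\"ahler metric satisfies the $\partial\overline\partial$-Lemma, equivalently the $dd^c$-Lemma; since this fails for both $I_+$ and $I_-$, neither $(M_{\psi,\rho},I_+)$ nor $(M_{\psi,\rho},I_-)$ admits a compatible K\"ahler metric.
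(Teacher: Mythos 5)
Your proposal is correct, and it follows the same overall strategy as the paper: reduce everything to the non-exactness of the Cartan $3$-form of $G$, then deduce the failure of the $dd^c_\pm$-Lemma and the non-existence of compatible K\"ahler metrics formally. The two arguments differ in how the individual steps are certified. For twistedness, the paper pulls $H$ back to the cover $K\times G\times\R^n$, identifies $\tilde H$ with a bi-invariant $3$-form on the compact group $G$, and invokes the fact that bi-invariant forms inject into de Rham cohomology, so exactness would force $\tilde H=0$, hence $\mathfrak g$ abelian, contradicting semisimplicity; you instead restrict $H$ to an embedded fibre copy of $G$ in $M_{\psi,\rho}$ and integrate over a $3$-cycle carried by an $SU(2)$-subgroup, which is more hands-on (Stokes plus the nonvanishing of the restricted Killing form on $\mathfrak{su}(2)$) at the cost of needing the standard existence of such a subgroup in any compact semisimple group; both routes are valid. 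For the last two assertions the paper simply cites \cite[Corollary 2.19]{MG}, whereas you unwind that citation directly: $H=d^c_+\omega_+$ is $d$-closed, $d^c_+$-closed and $d^c_+$-exact, so the $dd^c_+$-Lemma would make it $d$-exact, contradicting twistedness, and the compactness of $M_{\psi,\rho}$ then rules out compatible K\"ahler metrics via the $\partial\overline\partial$-Lemma. Your self-contained version is accurate and arguably more informative than the citation.
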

\begin{proof}
By contradiction, assume that the torsion 3-form $H$ of the generalized K\"ahler structure $(h,I_\pm)$ on $M_{\psi,\rho}$ is exact. Then, defined by $\pi$ the covering map $\pi:K \times G \times \R^n \to M_{\psi,\rho}= (K \times G \times \R^n)_{\psi,\rho}$, we have that $\pi^* H$ is also exact. By construction $\pi^* H=\tilde{H}$, where $\tilde H=d^c_{\tilde J_+} \tilde \omega_+=d^c_{J_R}\omega_R$, and so $\tilde H$ can be identified with a $3$-form on $G$ by previous remarks. By the exactness of $H$, $[\tilde{H}] =0 \in H^3(G')\cong H^3(G)$, by K\"unneth formula. Moreover, since $G$ is compact, $H^3(G) \cong \Omega_I(G)$, where  $ \Omega_I(G)$ is the complex of bi-invariant forms. It then follows that $\tilde{H}$ must be the zero form on $G$ and hence $[X,Y]=0$ for any pair of left-invariant vector fields in $G$. This would implies that the Lie algebra of $G$ is abelian, but this provides a contradiction, as $G$ is semisimple. The result follows by applying \cite[Corollary 2.19]{MG}.
\end{proof}

\section{Dolbeault cohomology}
We give now a description of $M_f$ as the total space of a holomorphic fibre bundle $p:M_f \to \Sph^3 \times \Sph^1$ with fibre $K$. To such fibre bundle it is always possible to associate the Borel spectral sequence, which relates the Dolbeault cohomology of the total space $M_f$ with that of the base space $\Sph^3 \times \Sph^1$ and that of the fibre $K$. \\
We first recall the following Theorem of Borel contained in \cite[Appendix II]{HB}. 
\begin{theorem} \label{theorem:Borel}
Let $p:T \to B$ be a holomorphic fibre bundle, with compact connected fibre $F$ and $T$ and $B$ connected. Assume that $F$ is K\"ahler. Then there exists a spectral sequence $(E_r,d_r)$, with $d_r$ being the restriction of the debar operator $\overline{\partial}$ of $T$ to $E_r$, satisfying the following properties:
\begin{enumerate}
    \item $E_r$ is $4$-graded by the fibre degree, the base degree and the type. Let $^{p,q}E_r^{u,v}$ be the subspace of elements of $E_r$ of type $(p,q)$, fibre degree $u$ and base degree $v$. We have that $^{p,q}E_r^{u,v}=0$ if $p+q \neq u+v$ or if one of $p,q,u,v$ is negative. Moreover, $d_r$ maps $^{p,q}E_r^{u,v}$ into $^{p,q+1}E_r^{u+r,v-r+1}$. \\
    \item If $p+q=u+v$
    \[
    ^{p,q}E_2^{u,v}=\sum_{k} H^{k,u-k}_{\overline{\partial}}(B) \otimes H^{p-k,q-u+k}_{\overline{\partial}}(F).
    \]
    \item The Borel spectral sequence converges to $H_{\overline{\partial}}(T)$.
\end{enumerate}
\end{theorem}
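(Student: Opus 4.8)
The plan is to exhibit $(E_r,d_r)$ as the spectral sequence of a filtered differential complex, refining the Leray spectral sequence of $p$ by the Dolbeault type. First I would take the Dolbeault complex $(A^{p,\bullet}(T),\overline{\partial})$ computing $H^{p,\bullet}_{\overline{\partial}}(T)$ and filter it by \emph{base degree}: using local holomorphic trivialisations $p^{-1}(U)\cong U\times F$, each smooth form has a well-defined number of cotangent directions (holomorphic and anti-holomorphic together) pulled back from $B$, and declaring the $v$-th filtration step to consist of forms of base degree at least $v$ yields a decreasing, $\overline{\partial}$-stable filtration. This is globally defined because the transition functions are biholomorphisms covering $\mathrm{Id}_B$, hence preserve the splitting of covectors into base and fibre parts. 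The four indices of the statement --- the type $(p,q)$ together with the fibre and base degrees --- are precisely the bookkeeping of this bigraded filtered complex.

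Since $T$ is finite dimensional the filtration is bounded, so the associated spectral sequence converges to the $\overline{\partial}$-cohomology of the total complex, namely $H_{\overline{\partial}}(T)$; this is item (3). The shape of the differential in item (1), $d_r\colon{}^{p,q}E_r^{u,v}\to{}^{p,q+1}E_r^{u+r,v-r+1}$, is then the general behaviour of the $r$-th differential of a filtered complex, once one records that $\overline{\partial}$ raises the Dolbeault index $q$ by one while moving the filtration level by $r$. The constraint $p+q=u+v$ is automatic, since base and fibre degrees add up to the total form degree.

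The substance of the proof is the identification of $E_2$ in item (2), and this is where the K\"ahler hypothesis on the fibre is indispensable. Passing to $E_1$ amounts to taking $\overline{\partial}$-cohomology \emph{along the fibres}, which produces the fibrewise Dolbeault cohomology $H^{\bullet,\bullet}_{\overline{\partial}}(F)$. By Hodge theory on the compact K\"ahler fibre these groups have the deformation-invariant Hodge numbers of $F$ and, since the transition cocycle acts by biholomorphisms preserving the Hodge decomposition, they assemble into a type-respecting system over $B$. The remaining differential is then the base $\overline{\partial}$ with coefficients in this system, so that $E_2$ becomes the base Dolbeault cohomology with such coefficients; a K\"unneth-type identification finally gives the stated $\sum_{k}H^{k,u-k}_{\overline{\partial}}(B)\otimes H^{p-k,q-u+k}_{\overline{\partial}}(F)$.

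I expect the main obstacle to be exactly this Hodge-theoretic step: showing that the fibrewise Dolbeault groups form a well-behaved, type-preserving coefficient system and that the induced differential on $E_1$ is genuinely the base $\overline{\partial}$ twisted by it. The K\"ahler condition on $F$ is what keeps the fibre Hodge numbers from jumping and makes the fibrewise harmonic projection compatible with the base variation; without it the $E_1$ sheaves need not be locally free and the clean tensor-product description of $E_2$ would fail. Once this comparison is established, both the convergence in item (3) and the index shifts in item (1) follow formally from the standard machinery of bounded filtered complexes.
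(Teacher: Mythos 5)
The paper does not prove this statement at all: Theorem \ref{theorem:Borel} is quoted as a known result from Borel's appendix to Hirzebruch's book (\cite[Appendix II]{HB}) and is used as a black box in Section 6, so there is no in-paper argument to compare yours against. That said, your plan is essentially Borel's own strategy: filter the Dolbeault complex of $T$ by the number of covectors pulled back from $B$ (more precisely, the $v$-th step consists of forms killed by contraction with $\dim F - v + 1$ vertical vectors, since a single form need not have a pure base degree), note that this filtration is bounded and $\overline{\partial}$-stable so that convergence and the bidegree of $d_r$ in item (1) are formal consequences of the machinery of filtered complexes, and then identify $E_1$ with fibrewise Dolbeault cohomology and $E_2$ with base cohomology twisted by it.

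The one place where the sketch stops short of the stated conclusion is item (2), and you are right to flag it as the main obstacle. Taking $\overline{\partial}$-cohomology along the fibres produces a coefficient system over $B$ with stalks $H^{\bullet,\bullet}_{\overline{\partial}}(F)$; the compact K\"ahler hypothesis is what makes this system well behaved (constant Hodge numbers, type decomposition preserved by the biholomorphic transition maps, local freeness of the relevant direct image sheaves). But a ``type-respecting system'' is not yet a \emph{constant} system: a priori $E_2$ is the Dolbeault cohomology of $B$ with values in this possibly nontrivial system, and passing from that to the clean tensor product $\sum_{k} H^{k,u-k}_{\overline{\partial}}(B)\otimes H^{p-k,q-u+k}_{\overline{\partial}}(F)$ requires an additional argument that the monodromy acts trivially on the fibre Hodge groups (or one must state $E_2$ with local coefficients). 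Your proposal gestures at this with ``assemble into a type-respecting system'' but does not supply the step; everything else (boundedness, convergence to $H_{\overline{\partial}}(T)$, the constraint $p+q=u+v$, the index shifts) is indeed routine. For the application in the paper this subtlety is harmless, since there the fibration is built from an explicit $\Z$-action by a K\"ahler isometry, but as a proof of the general statement it is the missing ingredient.
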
 
Let us denote by $\pi$ the well-defined projection
$$
\pi:M_f \to \Sph^3 \times \Sph^1, 
 [(p,q,t)] \mapsto [(p,t)].
$$
We always assume that $M_f$ is endowed with the standard complex structure $I$, which we recall to be induced by $\tilde{J}=J \times J_-$, and $\Sph^3 \times \Sph^1 $ is endowed with the standard complex structure induced by $J_-$, i.e.,  the standard complex structure of its universal cover. With respect to such complex structures, the bundle map $\pi$ is holomorphic. Now we exhibit a local trivialization around each point $[(q,t)]$ of $\Sph^3 \times \Sph^1$. \\
First we consider points of the kind $[(q,t)]$ with $t \neq 0,1$. Let $U=V \times (t-\varepsilon,t+\varepsilon)$, where $V$ is an open neighborhood of $q$ and $\varepsilon$ is such that $(t-\varepsilon,t+\varepsilon)$ does not contain $0$ and $1$.  Then $\pi^{-1} (U)=K \times V \times (t-\varepsilon,t+\varepsilon)$. We define the local trivialization
$$
\phi_U: \pi^{-1} (U) \to U \times K, 
[(p,q,t)] \mapsto ([(q,t)],p),
$$
which clearly is a biholomorphism. \\
Now we consider points of the kind $[(q,0)]$. An open neighborhood of $[(q,0)]$ is given by $U=\pi'(V \times [0,\varepsilon) \sqcup V \times (1-\varepsilon,1])$ where $V$ is an open neighborhood of $q$ in $\Sph^3$, $\abs{\varepsilon} < \frac{1}{2}$ and $\pi': \Sph^3 \times \R \to  \Sph^3 \times \Sph^1$ is the standard quotient map induced by the $\Z$-action on $\R$ by traslations. \\
Therefore, $\pi^{-1}U=\pi''(K \times V \times [0,\varepsilon) \sqcup K \times V \times (1-\varepsilon,1])$, where $\pi''$ is the mapping torus map $K\times \Sph^3 \times \Sph^1 \to M_f$.  The local trivialization is defined on the representatives as
\begin{align*}
\phi_U:\pi^{-1}U &\to U \times K \\
[(p,q,t)] &\mapsto \begin{cases}  
                              ([(q,t)],p) \ \text{if} \ t \in [0,\varepsilon) \\
                              ([(q,t)],\psi^{-1} (p)) \ \text{if} \ t \in (1-\varepsilon,1 ] .
                              \end{cases}
\end{align*}    
Although the definition of $\phi_U$ depends on the representative chosen, it is immediate to prove that it is well posed. Indeed
\[
\phi_U[(p,q,0)]= ([(q,0)],p)=([(q,1)],p)=\phi_U[(\psi(p),q,1)].
\]   
Moreover, since $\psi$ is holomorphic respect to $J$, the holomorphy of $\phi_U$ follows. We exhibit now the inverse for $\phi_U$. We define
\begin{align*}
\phi_U^{-1}: U \times K &\to  \pi^{-1}U  \\
([(q,t)],p) &\mapsto \begin{cases}  
                              [(p,q,t)] \ \text{if} \ t \in [0,\varepsilon) \\
                              [(\psi (p),q,t)] \ \text{if} \ t \in (1-\varepsilon,1 ] .
                              \end{cases}
\end{align*}  
Again, $\phi_U^{-1}$ is well defined
\[
\phi_U^{-1}([(q,0)],p) =[(p,q,0)]=[(\psi(p),q,1)]=\phi_U^{-1}([(q,1)],p),
\]
and it is holomorphic since so is $\psi^{-1}$. It remains to prove that $\phi_U \circ \phi_U^{-1}=Id$ and $\phi_U^{-1} \circ \phi_U=Id$.  Although it is a straightforward check, we report it here for the sake of completeness. 
\[
[(p,q,t)] \xrightarrow{\phi_U} 
                               \begin{cases}  
                              ([(q,t)],p) \ \text{if} \ t \in [0,\varepsilon) \\
                              ([(q,t)],\psi^{-1} (p)) \ \text{if} \ t \in (1-\varepsilon,1 ] .
                              \end{cases}
 \xrightarrow{\phi_U^{-1}}
 [(p,q,t)] 
\]
and 
 \[
([(q,t)],p) \xrightarrow{\phi_U^{-1}}
 \begin{cases}  
[(p,q,t)] \ \text{if} \ t \in [0,\varepsilon) \\
[(\psi (p),q,t)] \ \text{if} \ t \in (1-\varepsilon,1 ] \\
 \end{cases}
\xrightarrow{\phi_U}
([(q,t)],p) .
\]
The following result easily follows
\begin{theorem} \label{theorem:holfib}
If $M_f$ is the mapping torus constructed as in the Theorems \ref{theorem:kahlermappingtorus}, then $(M_f,I)$ is the total space of the holomorphic fibre bundle
\[
   (K,J) \to (M_f,I) \to (\Sph^3 \times \Sph^1,J_-).
\]
Then we have an associated Borel spectral sequence $(E_r,d_r)$ satisfying the following properties:
\begin{enumerate}
\item If $p+q=u+v$
    \[
    ^{p,q}E_2^{u,v  }=\sum_{k} H^{k,u-k}_{\overline{\partial}}(\Sph^3 \times \Sph^1) \otimes H^{p-k,q-u+k}_{\overline{\partial}}(K).
    \] 
\item The Borel spectral sequences converges to $H^{\bullet, \bullet}_{\overline{\partial}}(M_f)$. 
\end{enumerate} 
\end{theorem}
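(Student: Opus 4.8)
The plan is to recognize the statement as a direct instance of Borel's Theorem \ref{theorem:Borel}, so that the bulk of the argument consists in verifying that $\pi : M_f \to \Sph^3 \times \Sph^1$ is genuinely a \emph{holomorphic} fibre bundle with fibre $(K,J)$ meeting the hypotheses of that theorem. First I would assemble the bundle structure from the explicit data produced above: the projection $\pi([(p,q,t)]) = [(q,t)]$ is well defined and holomorphic with respect to $I$ on the total space and to $J_-$ on the base, and the local trivializations $\phi_U$, together with their inverses $\phi_U^{-1}$, cover $\Sph^3 \times \Sph^1$ and are biholomorphisms onto $U \times K$. Because the transition maps between overlapping charts are assembled from $\psi$ and $\mathrm{Id}_K$, both holomorphic, the cocycle is holomorphic and $(M_f,I)$ is a holomorphic fibre bundle with typical fibre $K$.

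Next I would check the remaining hypotheses of Theorem \ref{theorem:Borel}. The base $\Sph^3 \times \Sph^1$, namely the primary Hopf surface endowed with the complex structure $J_-$, is connected; the total space $M_f \cong M_\psi \times \Sph^3$ is connected by Proposition \ref{proposition:prop}; and the fibre $K$ is a compact connected K\"ahler manifold by the standing hypothesis of Theorem \ref{theorem:kahlermappingtorus}. In particular the decisive requirement in Borel's theorem, that the fibre be K\"ahler, holds.

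With these verifications in place, Theorem \ref{theorem:Borel} applies verbatim with $T = M_f$, $B = \Sph^3 \times \Sph^1$ and $F = K$. Specializing the general description of the second page in item (2) of that theorem yields statement (1), while statement (2) is exactly its convergence clause. The only genuinely substantive point is the first step, namely confirming that the charts patch together into a holomorphic bundle rather than merely a smooth one; but this has already been settled by the explicit well-definedness and holomorphy checks for $\phi_U$ and $\phi_U^{-1}$ carried out above. I therefore expect no real obstacle, and the result follows as an immediate corollary of Borel's theorem.
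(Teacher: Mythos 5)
Your proposal is correct and follows essentially the same route as the paper, which likewise establishes the holomorphic fibre bundle structure via the explicit trivializations $\phi_U$ (holomorphic because $\psi$ and $\psi^{-1}$ are) and then invokes Borel's Theorem~\ref{theorem:Borel} with $T=M_f$, $B=\Sph^3\times\Sph^1$, $F=K$. Your additional explicit check of connectedness and of the K\"ahler hypothesis on the fibre is exactly what the paper's ``easily follows'' implicitly relies on.
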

\begin{corollary}
The mapping tori $(M_f, I)$ do not admit any balanced metrics.  
\end{corollary}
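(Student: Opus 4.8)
The plan is to derive a contradiction from the holomorphic fibration structure established in Theorem \ref{theorem:holfib} by \emph{integrating along the fibre}. Set $N=k+2=\dim_{\C}M_f$ and suppose, for contradiction, that $\omega$ is a balanced metric on $(M_f,I)$, i.e. a positive $(1,1)$-form with $d\omega^{N-1}=0$. By Theorem \ref{theorem:holfib} the projection $\pi\colon M_f\to\Sph^3\times\Sph^1$ is a holomorphic fibre bundle with compact fibre $K$ of complex dimension $k$, and the base $\Sph^3\times\Sph^1$ is the Hopf surface, of complex dimension $2$.

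First I would push $\omega^{N-1}$ down to the base via integration over the fibre, $\pi_*\colon\Omega^{\bullet}(M_f)\to\Omega^{\bullet-2k}(\Sph^3\times\Sph^1)$. Since $\omega^{N-1}$ has degree $2(k+1)$, the form $\eta:=\pi_*(\omega^{N-1})$ has degree $2$; as $\pi$ is holomorphic and the fibres are complex of dimension $k$, fibre integration removes exactly the fibre bidegree $(k,k)$, so $\eta$ is of type $(1,1)$. Because integration along the fibre of a proper oriented bundle commutes with the exterior derivative, one gets $d\eta=\pi_*(d\omega^{N-1})=0$, so $\eta$ is a closed $(1,1)$-form on the Hopf surface.

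The crux is to check that $\eta$ is not merely semi-positive but positive definite. The form $\omega^{N-1}$ is strongly positive, being a power of the positive $(1,1)$-form $\omega$, and I expect fibre integration to preserve strict positivity in the horizontal directions. I would verify this by a local computation in a trivialisation $\pi^{-1}(U)\cong U\times K$: writing $\omega$ in blocks adapted to the horizontal and vertical distributions, the value of $\eta$ on $(\xi,\overline\xi)$ for a nonzero $(1,0)$-vector $\xi$ of the base reduces, after integrating the remaining $\omega^{N-2}$ over the fibre, to a strictly positive multiple of a fibrewise volume. This positivity step is the only real obstacle; everything else is formal. Granting it, $\eta$ is a closed positive $(1,1)$-form on a complex surface, hence a Kähler form on $\Sph^3\times\Sph^1$, contradicting the fact that the Hopf surface is non-Kähler (its first Betti number equals $1$, which is odd). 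Therefore $(M_f,I)$ admits no balanced metric.
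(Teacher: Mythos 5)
Your proof is correct and takes essentially the same route as the paper: the authors simply invoke Michelsohn's Proposition 1.9 (balancedness pushes forward under proper holomorphic submersions, proved by exactly your fibre-integration argument) together with the fact that the Hopf surface admits no balanced metric, which for a complex surface is precisely the non-K\"ahlerness you use. The positivity step you single out is standard, e.g.\ via $\pi_*(\omega^{N-1})\wedge i\alpha\wedge\overline{\alpha}=\pi_*\bigl(\omega^{N-1}\wedge i\,\pi^*\alpha\wedge\pi^*\overline{\alpha}\bigr)>0$ for every nonzero $(1,0)$-form $\alpha$ on the base, so nothing essential is missing.
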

\begin{proof}
The projection map $p:(M_f, I) \to (\Sph^3 \times \Sph^1, J_-)$ is an holomorphic submersion, which is proper, as $M_f$ is compact. Since $(\Sph^3 \times \Sph^1, J_-)$ does not admit any balanced metric, the result follows by applying \cite[Proposition 1.9]{MI}.

\end{proof}

\smallskip
\textbf{Acknowledgements}. 
The authors would like to thank Jeffrey Streets and Mario Garcia-Fernandez for their interest in this paper and Leander Stecker for kindly pointing out reference \cite{AFF}. The first author would also like to thank Tommaso Sferruzza for useful discussions and suggestions.  
Beatrice Brienza and Anna Fino are partially supported by Project PRIN 2022 \lq \lq Geometry and Holomorphic Dynamics” and by GNSAGA (Indam). Anna Fino   is also supported  by a grant from the Simons Foundation (\#944448). She would like also to thank  Simon Foundation for the  support  to the  participation at  the  MATRIX Research Program "Spectrum and Symmetry for Group Actions in Differential Geometry II,"
 24 July - 4 August 2023. 
Gueo Grantcharov is partially supported by a grant from the Simons Foundation (\#853269).
\smallskip

\end{document}